\numberwithin{equation}{section}
\theoremstyle{plain}
\newtheorem{theorem}{Theorem}[section]
\newtheorem{proposition}[theorem]{Proposition}
\newtheorem{lemma}[theorem]{Lemma}
\theoremstyle{definition}
\newtheorem{definition}[theorem]{Definition}
\newtheorem{question}[theorem]{Question}
\theoremstyle{remark}
\newtheorem{remark}[theorem]{Remark}
\newtheorem{claim}[theorem]{Claim}
\newcommand{\round}[1]{\left\lfloor #1 \right\rfloor_N}
\newcommand{\Rmnum}[1]{\expandafter\@slowromancap\romannumeral #1@}
\newcommand{\Z}{\mathbb{Z}}
\newcommand{\E}{\mathbb{E}}
\newcommand{\N}{\mathbb{N}}
\newcommand{\eps}{\epsilon}
\newcommand{\R}{\mathbb{R}}
\renewcommand{\P}{\mathbb{P}}
\newcommand{\supp}{\operatorname{supp}}
\newcommand{\sgn}{\operatorname{sgn}}
\DeclareMathOperator{\Cov}{Cov}
\title{Long Range Voter Models and Dynamical Fractional Brownian Motion}
\author{Reuben Drogin}
\address{Department of Mathematics, Yale University, New Haven, CT 06511}
\email{reuben.drogin@yale.edu}
\begin{document}

\begin{abstract}
    We study the voter model on $\Z$ with long-range interactions, 
    as proposed by Hammond and Sheffield in \cite{HS}. We show a
    spacetime rescaling converges to a fractional Gaussian free field, which 
    can be viewed as a one-parameter family of
    fractional Brownian motions. As a consequence, we obtain that
    long-range voter models rescale to fractional Gaussian noise. The argument uses 
    the Lindeberg swapping technique and heat kernel estimates for 
    random walks with jump distributions in the domain 
    of attraction of a stable law.  
\end{abstract}
\maketitle
\section{Introduction}

\subsection{Summary and Background}
Consider the following simple interacting particle system on $\mathbb{Z}^d$. At time $0$
each site is assigned the value $+1$ or $-1$, and at each time step the site $n$ updates its value by adopting
the value at $n-k_n$, where $(k_{n})_{n\in\mathbb{Z}^d}$ are i.i.d. samples of some fixed measure 
$\mu$ on $\mathbb{Z}^d$, sampled independently at each time step. 
This is the \textit{voter model with interactions $\mu$}, introduced by Holley and Liggett in \cite{holllig}.
When $\mu$ is short-range, say nearest neighbor, 
there is a rather complete understanding of this system which we now recall.

In $d=1,2$, any two sites will eventually have the same value almost surely. 
This is because the probability that $n,m\in\mathbb{Z}^d$ have the same value at time $t$ is 
at least as big as the probability that random walks started at $n$ and $m$ intersect before time $t$, see \cite{holllig}. 
Intuitively, one might expect to see clusters of aligned values which grow in time. 
Arratia \cite{Arr} and Bramson and Griffeath \cite{clustering}
showed this is indeed the case in $d=1$ with the model at time $t$ consisting mostly of strings of $+1$s or $-1$s with length 
of order $\sqrt{t}$. See figure \ref{fig: spacetime plot} below for an illustration. Cox and Griffeath 
\cite{2dclustering} showed similar behavior in $d=2$.

In $d\geq 3$, the transience of the simple random walk allows for equilibrium states where $+1$s and $-1$s coexist.
Moreover, for suitable initializations the Gaussian free field appears as the scaling limit.
Precisely, if $X(n,t)$ denotes the $\pm 1$ value at site $n$ at time $t$,
then $(X(\cdot,t))_{t\in\mathbb{N}}$ is a Markov chain on the state space $\{-1,+1\}^{\mathbb{Z}^d}$, 
and Holley and Liggett \cite{holllig} showed that if the random walk generated by $\mu$ is transient, 
this Markov chain has a one-parameter family $\{ \nu_p\ |\ p\in[0,1]\}$ of extremal 
invariant measures on $\{-1,+1\}^{\mathbb{Z}^d}$. These $\nu_p$ are translation invariant,
satisfy $\nu_p(X(n,0) = 1) = p$, and have decaying correlations.
Bramson and Griffeath \cite{BG} showed $\nu_p$ rescales to the Gaussian free field when $p\in (0,1)$ and $d=3$.
Zahle \cite{Z} later extended this result to $d\geq 4$.

In this paper, we study the voter model when $\mu$ is a \textit{long-range} measure on $\mathbb{Z}$,
roughly meaning $\mu(\{ \pm m, \pm (m+1),...\})\sim m^{-\alpha}$ for some 
$\alpha\in(0,1)$ (see 
definition \ref{def: measure class} for the precise measure class). For such $\mu$ the random walk generated by $\mu$ is transient 
and thus the corresponding voter model 
has a one-parameter family of extremal invariant measures $\{\nu_p\ | p\in[0,1]\}$. Theorem \ref{cor1} shows these 
measures scale to fractional Gaussian noise with parameter $\alpha/2$, which is white noise convolved with a long-range kernel. 
See figure \ref{fig: spacetime plot} below for an illustration. This complements the classical results of Bramson and Griffeath and Zahle in the short-range case. 
Our proof takes a different approach than \cite{BG} and \cite{Z}, using the Lindeberg swapping method, and in fact
this result is corollary of our main result, Theorem \ref{thm: main}.
\begin{figure}[htp]
    \centering
    \subfloat[\centering A simulation of the voter model for a long range $\mu\in \Gamma_{1/2}$. It exhibits coexistence.]{{
    \includegraphics[width = 7cm, height=6cm]{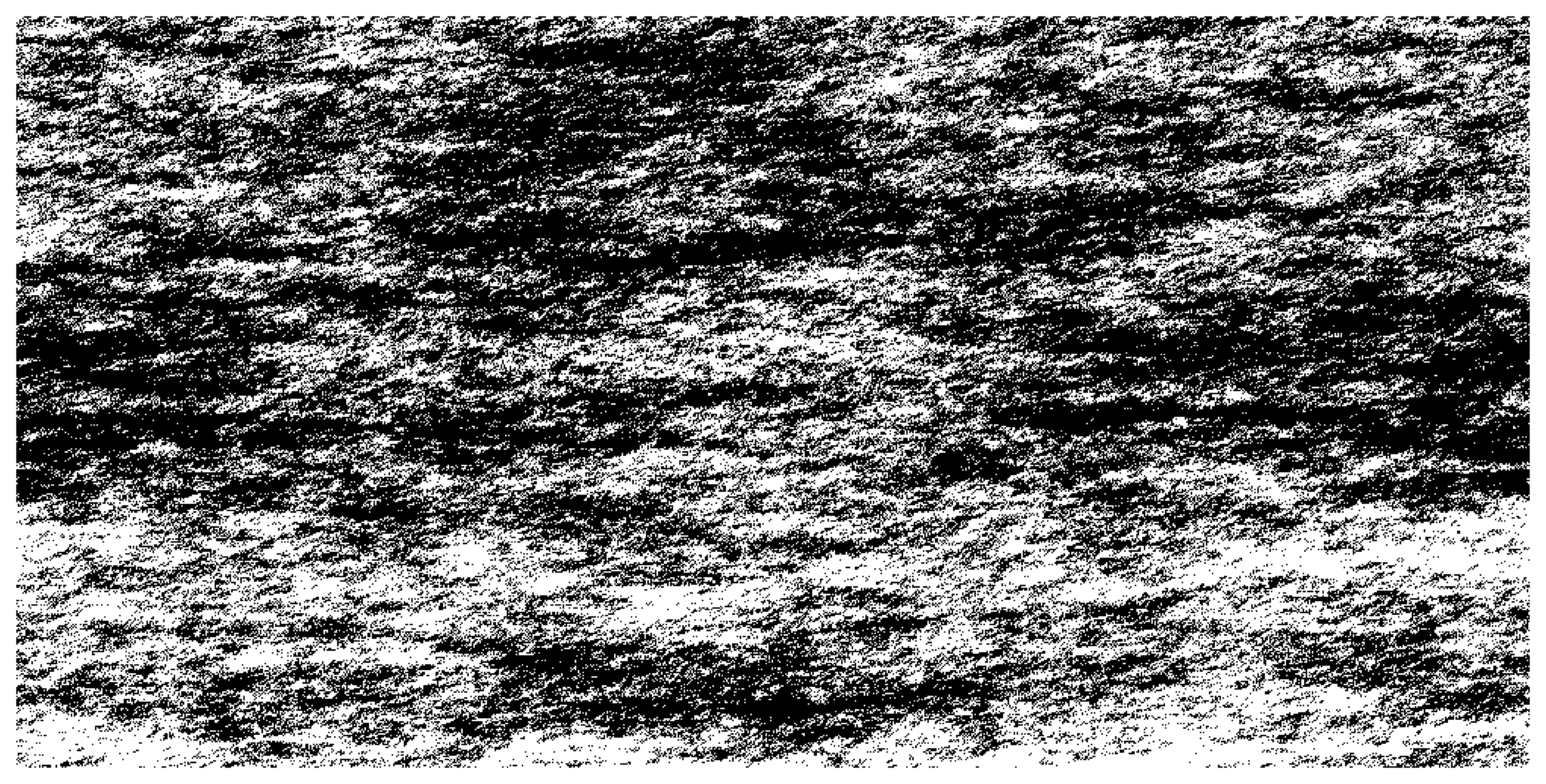}
    }}
    \hspace{1cm}
    \subfloat[\centering A simulation of the voter model when $\mu$ is nearest neighbor. It exhibits clustering]{{
    \includegraphics[width = 7cm, height=6cm]{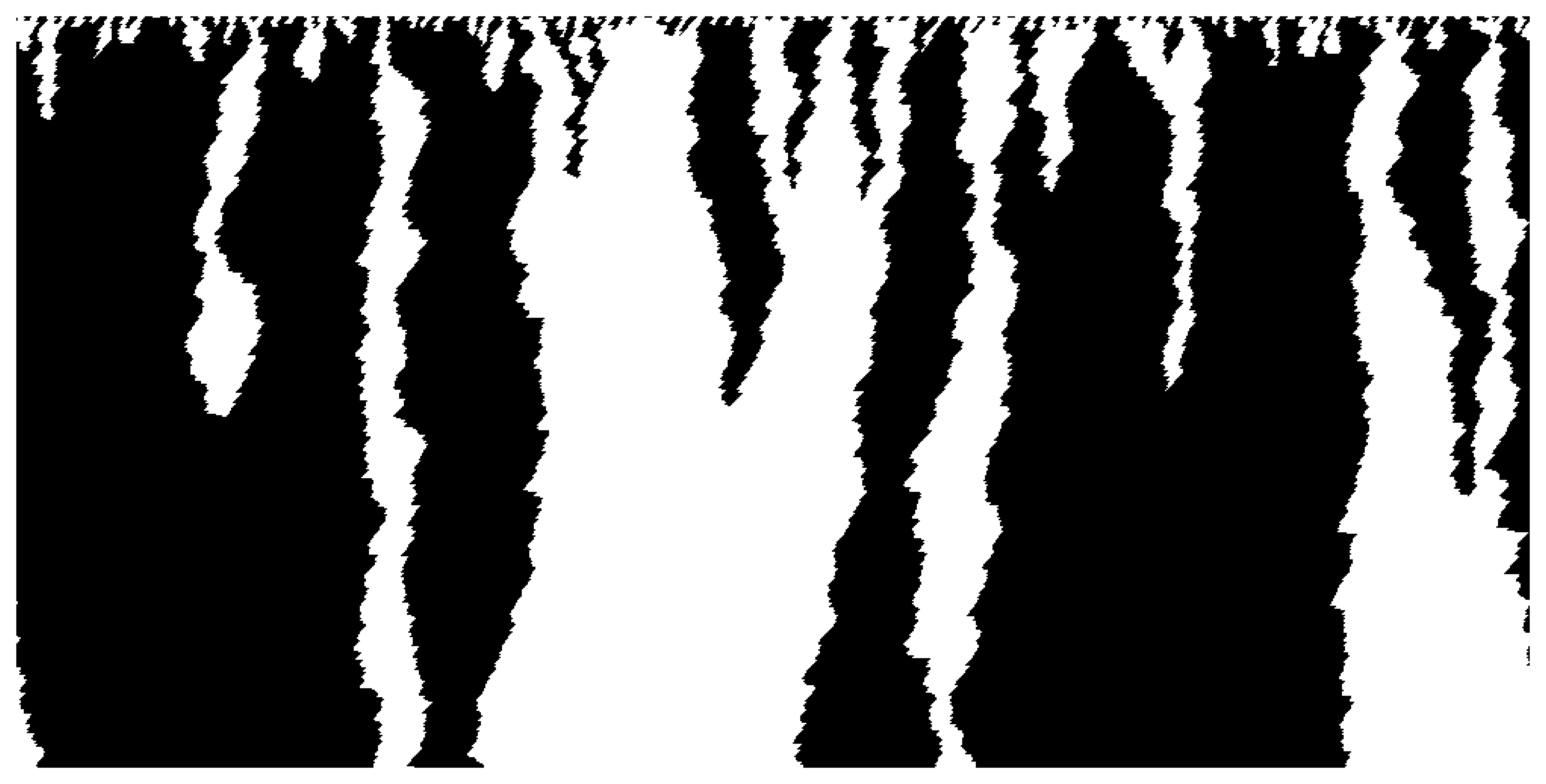}
    }}
    \caption{Space-time plots of long-range and short-range voter models in one dimension.}
    \label{fig: spacetime plot}
\end{figure}
Theorem \ref{thm: main} captures the time-dynamics of the voter model, giving a scaling limit 
in the spatial variable 
$x$ \textit{and the time variable $t$
simultaneously}. More precisely, we show the random walks $(S(\cdot,t))_{t\in\mathbb{N}}$, 
defined by $S(n,t) := \sum_{m=1}^n X(n,t)$, 
rescale to a family of fractional 
Brownian motions $(W_{\alpha}(\cdot,t))_{t\in \mathbb{R}_{\geq 0}}$. 
Just as the voter model is a Markovian evolution of some initial condition, the 
$W_{\alpha} (\cdot,t)$ can be viewed as a Markovian evolution of the initial fraction Brownian $W_{\alpha}(\cdot,0).$ 
This is why we call $W_\alpha$ a \textit{dynamical fractional Brownian motion}.

This viewpoint comes from the work of Hammond and Sheffield in \cite{HS}, who introduced a family of 
simple random walks which rescale to fractional Brownian motion. The walks are reminiscent of an urn scheme 
and can be described intuitively as follows: given the increments $(X_m)_{m<n}$ the $n-$th increment of the walk 
$X_n$ is determined by making an independent sample $k_n$ of a measure $\mu$ on $\mathbb{Z}_{>0}$ and letting $X_n = X_{n-k}$.
This description of the walks is not exactly rigorous, 
since it would take some initial condition to construct them. To get around this 
Hammond and Sheffield use a Gibbs measure formulation. For measures $\mu$ with tails decaying at
a power-law rate $\alpha \in (0,1/2)$,
Hammond and Sheffield show that these walks scale to fractional Brownian motion with
Hurst index $1/2+\alpha$. One reason \cite{HS} is notable is that it gives a simple discrete object that 
scales to fractional Brownian motion. In some sense these walks can be considered the 
natural discrete analogues of fractional Brownian motion. The model leading to our main theorem
was suggested in \cite{HS} as a dynamical variant of the walks in Hammond-Sheffield.

Since the work of Hammond and Sheffield other related models have been considered. 
Bierm{\'e}, Durieu, and Wang generalized the Hammond-Sheffield model to $\Z^d$ in \cite{d2}, and 
these authors explored related urn schemes and scaling limits in \cite{d1}, \cite{d3}, \cite{d4}. 
Recently, in \cite{JL}, Ingelbrink and Wakolbinger gave a new proof of Gaussianity for the 
Hammond-Sheffield model using Stein's method. We also mention the work of Enriquez which predates 
the result of Hammond-Sheffield. Enriquez \cite{NE} showed that one can obtain fractional Brownian motion
by rescaling a sum of many independent random walks, where each individual random walk has some
dependence between steps. 

A difficulty to proving Gaussianity in many of these models is that they exhibit long-range dependence. 
Studying fields with long-range correlations can be challenging as many traditional 
tools rely on strong independence assumptions. One contribution of this work is to adapt the 
Lindeberg swapping trick to this setting for an intuitive proof of Gaussianity. 
Our methods can be adapted to higher dimensions and to give a
quantitative rate of convergence, given quantitative information on $\mu$.

\subsection{Statement of Results}
We define the voter model on $\mathbb{Z}$ with interactions $\mu$ as the Markov chain 
$(\eta_t)_{t\in\mathbb{Z}}$ with state space $\{-1,+1\}^\mathbb{Z}$ and
transition function $P:\{-1,+1\}^\mathbb{Z}\times \{-1,+1\}^\mathbb{Z}\to \mathbb{R}$ given by
\begin{align*}
P(\eta,\cdot) = \rho_\eta(\cdot),
\end{align*}
where $\rho_\eta$ is the product measure on $\{-1,+1\}^\mathbb{Z}$ with marginals 
\begin{align*}
\rho_{\eta}(\{\xi\in \{-1,+1\}^\mathbb{Z} \ |\ \xi(m) = +1\}) = \sum_{n\in\mathbb{Z}} \mu({n-m})1_{\eta(n) = 1}.
\end{align*}

We consider $\mu$ in the following class of measures. 
\begin{definition}
\label{def: measure class}
For any $\alpha\in (0,1)$ let $\Gamma_\alpha$ be the class of probability measures on $\mathbb{Z}$ such that for all $n\in \mathbb{Z}$
\begin{enumerate} 
\item $\mu(\{-n\}) = \mu(\{n\})$,
\item $\mu(\{n,n+1,...\}) = L(n)n^{-\alpha}$, for some slowly varying $L:\mathbb{R}_+\to\mathbb{R}_+$,
\item the greatest common divisor of $\supp(\mu)$ is $1$.
\end{enumerate}
Recall that $L:\R_{+}\to \R_{+}$ is \textit{slowly varying} if for each $c\in (0,\infty)$
$$\lim_{x\to\infty} \frac{L(cx)}{L(x)}= 1.$$
\end{definition}

It is a classical fact that for $\mu\in \Gamma_{\alpha}$ the Markov chain $(\eta_t)_{t\in\mathbb{Z}}$ has a 
a one-parameter family of extremal invariant measure $\{\nu_p\ | p\in [0,1]\}$ which are translation
invariant and characterized by the property $\nu_p(\{\xi\in \{-1,+1\}^\mathbb{Z}\ |\ \xi(0) = 1\}) = p$. 
See Theorem 1.9 in \cite{holllig} or Theorem \ref{thm: extremal measures} below. Note that if $p=0$ or $p=1$, then $\eta_t(n)$ is almost surely 
constant. Our first main theorem is as follows. 

\begin{theorem}\label{cor1}
Let $p,\alpha\in(0,1)$, $\mu\in \Gamma_\alpha$, and $\xi:\Z\to\{+1,-1\}$ have law $\nu_p$. Define 
$F^n_p\in (\mathcal{S}(\mathbb{R}))^{'}$ to be 
the random tempered distribution defined by
$$F^n_p (\phi) := \frac{1}{\sigma_n\sqrt{\alpha(\alpha/2 + 1/2)}}\sum_{i\in\Z}\phi\left(\frac{i}{n}\right)(\xi(i) - (2p-1)),$$
for any $\phi\in \mathcal{S}(\mathbb{R})$, where $\sigma_n$ is the deterministic sequence given in Theorem \ref{thm: main}.
Then 
$$F_p^n(\phi)\underset{d}\to \mathcal{N}(0,\langle \phi, (-\Delta)^{-s}\phi\rangle)$$ 
as $n\to\infty$. In the language of generalized random fields, $F_p^n$ converges in distribution, as a random tempered distribution, to $FGF_{\alpha/2}$, the fractional Gaussian field on 
$\R$ with index $\frac{\alpha}{2}$, defined below. 
\end{theorem}

The fractional Gaussian field on $\R^d$ with index $h$, denoted $FGF_{h}(\R^d)$, 
is defined as a random tempered distribution given by $(-\Delta)^{-h/2} W$, where $W$ is Gaussian white noise on $\R^d$. 
$FGF_{\alpha/2}$ is known as fractional Gaussian noise and can be intepreted as an mean $0$, Gaussian process
indexed by the Schwarz functions with covariance 
\begin{align*}
\Cov(FGF_{\alpha/2}(\phi), FGF_{\alpha/2}(\psi)) = \langle \phi,(-\Delta)^{-\alpha/2} \psi\rangle = \int_{\mathbb{R}}\int_{\mathbb{R}}\frac{\phi(x)\psi(y)}{|x-y|^{1-\alpha}}dx\ dy.
\end{align*}
For more on fractional Gaussian fields the reader is encouraged to consult the survey \cite{lssw}. 

The above theorem is a corollary of our main results. To state our main results, 
we introduce the random directed graph $\mathcal{G}_{\mu}$ with vertex
set $\mathbb{Z}^2$ defined as follows: for each $(n,t)\in \Z^2$ make 
an independent sample of $\mu$, denoted $J_{(n,t)}$, and attach an outward directed edge starting at $(n,t)$ and pointing to $(n-J_{(n,t)},t-1)$. 
\begin{figure}[htp]
    \centering
    \includegraphics[width=12cm, height  = 7cm]{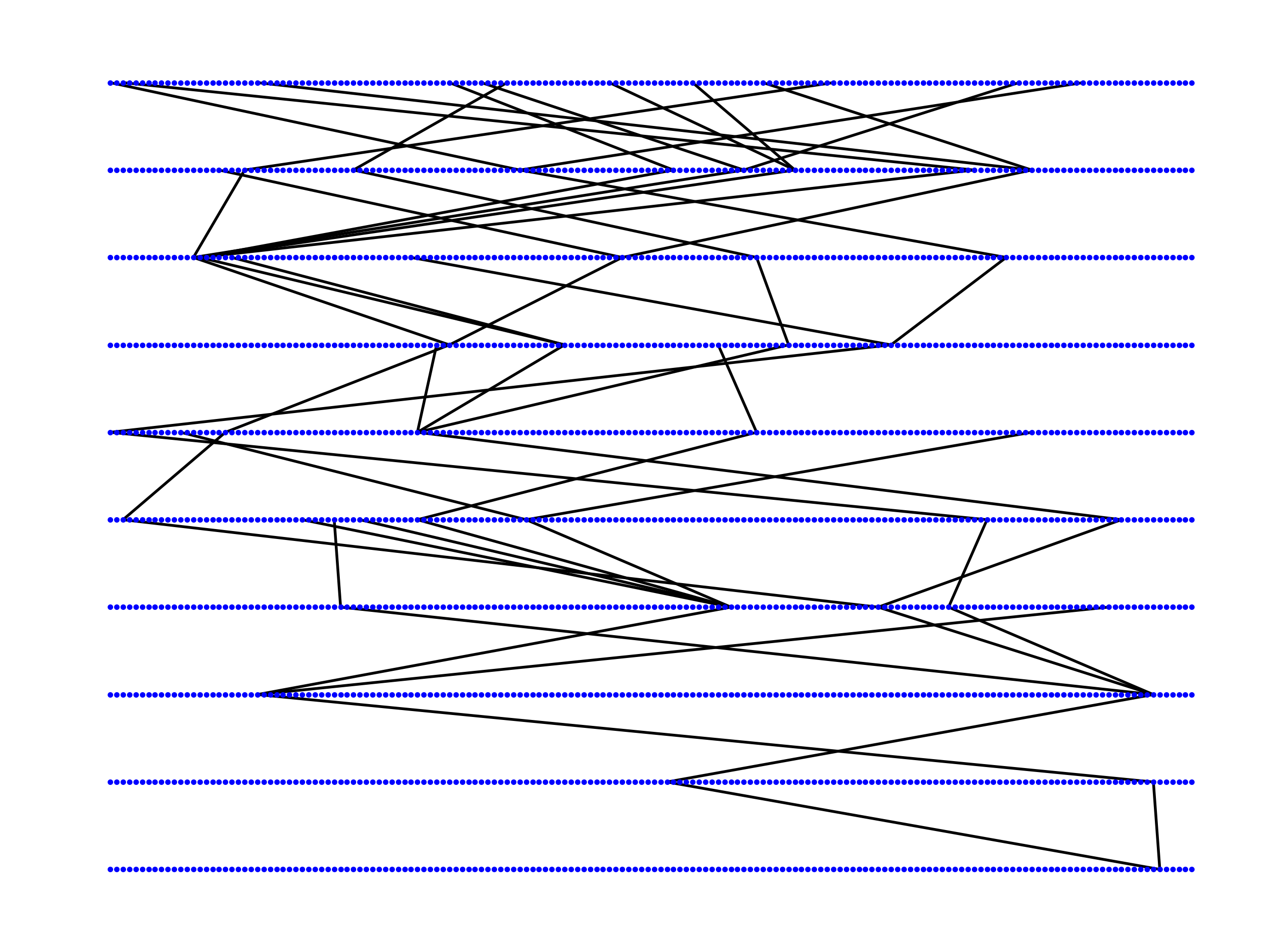}
    \caption{A connected component in a sample of $\mathcal{G}_\mu$, for a truncated $\mu\in\Gamma_{1/2}$ }
    \label{fig:sample}
\end{figure}

For $p\in (0,1)$ define the random field $X_p:\Z^2\to \{+1,-1\}$ 
by sampling the random graph $\mathcal{G}_\mu$ and independently assigning $+1$ or $-1$ to each connected component of $\mathcal{G}_\mu$
with probabilities $p$ and $1-p$ respectively. Precisely, if $(T_\beta)_{\beta \in I}$ denotes 
the (random) connected components of $\mathcal{G}_\mu$, and $(B_\beta)_{\beta\in I}$ is a collection of i.i.d. 
random variables taking the values $+1$ or $-1$, with probabilities $p$ and $1-p$ respectively, then we define 
$X_p(n,t) := \sum_{\beta\in I}1_{T_\beta}(n,t)B_\beta.$ Finally, define the random field  
\begin{equation}
\label{defn: S(n,t)}
S(n,t):= \begin{cases} 
    \sum_{i=1}^n X_p(i,t) & n > 0 \\
    -\sum_{i=0}^{n} X_p(i,t) & n \leq 0.
 \end{cases} 
\end{equation}

\begin{theorem}\label{thm: main}
Let $p,\alpha\in(0,1)$, $\mu\in\Gamma_\alpha$, and define $\sigma_n$ by 
$$\sigma_n^2 := \tilde{c}_pn^{1+\alpha}L(n)^{-1},$$
where $\tilde{c}_p$ is the deterministic constant given in Lemma 3.1. Then the random field 
$$S_n(x,t):\R^2\to \R^2\ :\  (x,t)\to \frac{1}{\sigma_n}\Big(S(\lfloor xn \rfloor,\lfloor tn^{\alpha}L(n)^{-1}\rfloor)-(2p-1)\lfloor xn\rfloor \Big)$$
converges in distribution to the Gaussian random field $W_\alpha(x,t)$, defined below, as $n\to\infty$.
\end{theorem}

\begin{remark}
From the proof it is clear that one could replace the independent random variables $(B_\beta)_{\beta\in I}$ 
used to construct $S(n,t)$ with any family of independent random variables having 3 bounded moments. 
In this sense the proof of Theorem \ref{thm: main} yields an invariance principle. 
\end{remark}

\begin{definition}
For each $\alpha\in(0,1)$, define $W_\alpha(x,t)$ to be the unique mean $0$ Gaussian random field on $\R^2$ with covariance
\begin{multline}\label{covariance function}
\Cov(W_{\alpha}(x_1,t_1), W_{\alpha}(x_2,t_2)) = \\
\frac{1}{2V(0,1)} \Big( V(t_2-t_1,x_1)|x_1|^{1+\alpha} + V(t_2-t_1,x_2)|x_2|^{1+\alpha} - V(t_2-t_1,x_2-x_1)|x_2-x_1|^{1+\alpha}\Big),
\end{multline}
where 
\begin{align*}
V(t,x)  
& := \int_0^{\infty}\frac{1-\cos(u)}  {u^{2+\alpha}}e^{-c_\alpha\frac{t}{|x|^\alpha}u^\alpha}\, du\\
c_\alpha & := \cos(\frac{\alpha \pi}{2})\Gamma(1-\alpha).
\end{align*}
\end{definition}

$W_{\alpha}$ is self-similar, has correlations in the
 time direction which decay exponentially, and can be seen 
as a kind of fractional Ornstein-Uhlenbeck process. Further features of the field $W_\alpha$ are discussed in Section \ref{sect: discussion}. 
To motivate our next theorem we note that at each $t\in \mathbb{R}$ the process $W(\cdot,t)$ is fractional Brownian 
motion with Hurst parameter $1/2+\alpha/2$.
Indeed, recall that fractional Brownian motion with Hurst parameter $H\in (0,1)$, which we denote $B^H$, is 
the unique self-similar Gaussian process on $\mathbb{R}$
with stationary increments, and covariance given by
\begin{align*}
\E(B^H(t)B^H(s)) = \frac{1}{2}(|t|^{2H} + |s|^{2H} - |t-s|^{2H}).
\end{align*}
Note that Brownian motion occurs when $H=1/2$, and $B^H$ exhibits long-range correlations when $H>1/2$ in the sense that 
$$\sum_{n=1}^\infty \mathbb{E}[B^H(1)(B^{H}(n+1)-B^{H}(n))] = \infty.$$
Our second main theorem says that $S_n(\cdot,0)$ converges to $B^H$ in the stronger $L^\infty$ topology.
\begin{theorem}
\label{thm:fbm}
Let $p,\alpha\in(0,1)$ and $\mu\in \Gamma_{\alpha}$. There exists 
a sequence of couplings $C_n$ of $S_n(\cdot,0)$ with $B^{1/2+\alpha/2}$ such that
for all $T,\epsilon>0$ 
$$\lim_{n\to\infty}C_n(||S_n(\cdot,0)-B^{1/2+\alpha/2}||_{L^\infty([0,T])}\geq \epsilon ) = 0. $$ 
\end{theorem}

\begin{remark}
Theorem \ref{cor1} follows readily from the above once we show that $(X_p(n,0))_{n\in\mathbb{Z}}$ 
has law $\nu_p$, and thus $(X_p(n,0))_{n\in\mathbb{Z}}$ is a sample of the voter model in equilibrium. 
This is done in Section 7. 
\end{remark}

\begin{remark}
It is an interesting question if the topology of convergence in Theorem \ref{thm: main} could
be improved to $L^\infty$. 
To see the subtlety of this question compare the increments of $S(n,t)$ in $n$ and $t$. 
On one hand, when $n\geq 0$ we have $|S(n+1,t)-S(n,t)| = |X_p(n+1,t)|= 1$ almost surely.
However $S(n,t+1)-S(n,t) = \sum_{j=1}^n X_p(j,t+1)-X_p(j,t)$ is a sum of $2n$ values of $X_p(\cdot,\cdot)$ 
and will be of much larger size than $1$, typically power law size in fact. 
Improving the topology of convergence in Theorem \ref{thm: main} is difficult 
because $S(n,t)$ has much less regularity in the $t$-direction. 
\end{remark}

\subsection{Discussion of the Scaling Limit $W_{\alpha}$}
\label{sect: discussion}

To our knowledge the field $W_{\alpha}$ has not appeared in the literature before, and it is interesting to consider some aspects of it. 
It is self-similar under the scaling $W_\alpha(x,t) \stackrel{d}{=} \frac{1}{r^{\frac{1}{2}(1+\alpha)}}W(rx,r^\alpha t)$. 
In particular, for each fixed $t\in\R$, $W_\alpha(\cdot,t)$ is fractional Brownian 
motion with Hurst parameter $1/2+\alpha/2$. In this sense, $W_\alpha$ may be viewed as a one-parameter family of evolving fractional
Brownian motions.

In the $\alpha\to 0$ limit, the Gaussian processs $W_\alpha(x,t)$ takes on the mean and covariance of
Brownian motion evolving under the Ornstein-Uhlenbeck process. Heuristically, one can see this in the dynamics of the voter 
model as follows. At time $0$ sample an $n$-step simple random walk. At time $1$ update the random walk by choosing one of the $n$ steps uniformly at 
random and independently resampling the $\pm 1$ value there. Repeating this update process gives a one-parameter family of $n-$step simple random walks indexed by time. 
Rescaling this family of random walks produces a family of Brownian motions evolving according to Ornstein-Uhlenbeck processes. In some sense 
this is the $\alpha\to 0$ limit of the voter model dynamics. Indeed, as $\alpha\to 0$ the measure $\mu$ has larger and larger tails so from time $t$ to $t+1$, the $\pm1$ value at $(n,t+1)$ is being 
sampled randomly from $X_p(\cdot,t)$ over an increasingly large range, making it, effectively, an independent 
random $+1$ or $-1$ value chosen with proabailities $p$ and $1-p$.

We finish by noting that $W_\alpha(x,t)$ is almost surely $\gamma$- holder continuous for all $\gamma <1/2$ in the $t$-direction, and 
is $\gamma$-Holder continuous for all $\gamma< 1/2+\alpha/2$ in the $x$-direction. One can see this by applying the Kolmogorov continuity criterion
to $W_\alpha$, using the definition to compute the necessary moments.

\subsection{A Brief Proof Sketch}
\label{sketch}

The heuristics underlying the proof are that if $T_{(k,t)}$ denotes the connected component of 
$G_\mu$ containing $(k,t)$, then 
with very high probability $|T_{(k,t)}\cap[0,n]\times\{t\}| \sim n^{\alpha}$ for $k\in[0,n]$.
Hence, if $I_{n,t}$ denotes set of connected components of $\mathcal{G}_{\mu}$ which intersect
$[0,n]\times\{t\}$ one should have
\begin{equation}
\label{eq:heuristic}
S(n,t)= \sum_{i=1}^n X_p(i,t) = \sum_{\beta\in I_{n,t}}B_\beta |T_\beta\cap[0,n]\times\{t\}|\approx 
    \sum_{j=1}^{n^{1-\alpha}} B_j n^\alpha,
\end{equation}
where the $B_j$ are independent Bernoulli random variables. By the central limit 
theorem, the expression on the RHS is asymptotically Gaussian with variance $n^{1+\alpha}$. 

The heuristic $|T_{(k,t)}\cap[0,n]\times\{t\}| \sim n^{\alpha}$ comes from noting that 
the probability a random walk with jump distribution $\mu$ started at $0$ ever 
intersects a random walk with jump distribution $\mu$ started at $m$, decays like $m^{-1+\alpha}$, 
and so it is reasonable to expect that 
$$\P((j,t) \in T_{(k,t)}) \sim \langle k-j\rangle ^{-1+\alpha},$$
which gives
$$\mathbb{E}|T_{(k,t)}\cap[0,n]\times\{t\}| =  \sum_{j=0}^n \P((j,t) \in T_{(k,t)}) \sim  n^{\alpha}.$$ 

This paper implements these heuristics rigorously, and is organized as follows. 
In Section 2 we show the random graph $\mathcal{G}_\mu$ has infinitely many connected components when $\alpha\in(0,1)$. 
It is natural to expect that this happens for $\alpha\in (0,1)$ since a random walk with step 
distribution $\mu$ is recurrent whenever $\alpha>1$ and transient 
whenever $\alpha<1$ \cite{Spitz}. The next step is to derive the asymptotic covariance of the field $S(n,t)$. 
This is done with a Fourier analytic argument, using an idea from \cite{HS}. We note that the 
techniques in \cite{HS} need to be revised significantly to deal with the multi-dimensional nature of the model. 

To finish the proof of Theorem \ref{thm: main} we need to prove $S(n,t)$ is asymptotically Gaussian, which is done 
in Section 5. As opposed to most prior works which use delicate martingale central limit theorems, we take a component-based 
approach using the Lindeberg exchange method to implement the heuristic given above. As suggested
in \eqref{eq:heuristic}, if all the $|T_\beta\cap [0,n]\times \{0\}|$ were deterministic and 
of size $n^\alpha$, one could apply the central limit theorem to conclude the sum is nearly Gaussian. 
While the $|T_\beta\cap [0,n]\times \{0\}|$ 
are not deterministic, they are independent of the random variables $(B_\beta)_{\beta\in I}$, and so 
we argue that we may "swap" the $B_\beta$'s for independent unit Gaussians, without changing the limiting distribution
of $S(n,t)$. 
With the $B_\beta$'s replaced by Gaussians we now have a sum like 
$$\sum_{\beta\in I_{n,t}} g_{\beta}|T_{\beta}\cap[0,n]\times \{t\}|,$$
where $(g_{\beta})_{\beta}$ are indepdent unit Gaussians. Conditioned on the $T_{\beta}$, this new sum is a Gaussian with 
variance $\sum_{\beta\in I_{n,t}}|T_{\beta}\times[0,n]\times\{t\}|^2$. Proving Gaussianity of the rescaled $S(n,t)$ now amounts to proving a law of large 
numbers for that quantity. The main technical points here are moment and covariance bounds on $|T_\beta\cap [0,n]\times \{t\}|$ proved in Section $4$.

Once Theorem \ref{thm: main} is proven, Theorem \ref{thm:fbm} follows 
by noting that Lemma \ref{cov} implies that $S_n(x,0)$ is Holder-continuous, and this is done in Section 6. Finally,
to prove Theorem \ref{cor1} the main
thing to check is that a time slice of $X_p(n,t)$ has the law of $\nu_p$. This is done in 
Section 7. 
\subsection{Notation}
Throughout the paper $C > 1 > c > 0$ denote
constants that may differ in each instance and depend on $\mu$. Any further dependencies will be explicitly stated. 
We also use the notation $a_n = b_n + o(c_n)$ if $\frac{|a_n-b_n|}{|c_n|}\to 0$ as $n\to\infty,$
and $a_n  = b_n + O(c_n)$ if for some constant $C>0$ $|a_n-b_n|\leq C|c_n|$ for all $n$ .

\subsection{Acknowledgements}
The author is grateful to Alan Hammond for suggesting this problem and stimulating discussions. The author would also like to thank Charles Smart for useful discussions and encouragement, and Adam Black for helpful comments on an earlier draft of the paper. 

\section{Number of Components of $\mathcal{G}_\mu$}
Our first result relates the rate of tail decay of $\mu$ to the number of connected 
components of $\mathcal{G}_\mu$. Recall that we defined $\mathcal{G}_{\mu}$ as a random directed
graph on $\mathbb{Z}^2$ in which each $(n,t)$ has an edge pointing to $(n-J_{(n,t)}, t-1)$, where
the $(J_{(n,t)})_{(n,t)\in\mathbb{Z}^2}$ 
are i.i.d. samples of $\mu$. We  define the following. 

\begin{definition}
The \textit{parent} of vertex $(n,t)$ is the vertex $(n-J_{(n,t)},t-1)$.
\end{definition}

\begin{definition}
The \textit{ancestral line of $(n,t)$} is the unique sequence of vertices starting at $(n,t)$ 
in which the next vertex in the sequence is the parent of the current vertex
\end{definition}

\begin{definition}
We write $(n_1,t_1)\sim (n_2,t_2)$ if the ancestral lines of 
$(n_1,t_1)$ and $(n_2,t_2)$ merge at some point.
\end{definition}

Note the law of the 
ancestral line of $(n,t)$ is given by a random walk started at $(n,t)$ with jump 
distribution $\mu$, and two points, say $(n_1,t_1)$ and $(n_2,t_2)$, are in the same connected component 
if and only if their ancestral lines merge at some point, i.e. $(n_1,t_1)\sim (n_2,t_2)$. 

\begin{proposition}\label{components}
Suppose $\mu\in\Gamma_\alpha$. If $\alpha>1$ then $\mathcal{G}_\mu$ has one connected component almost surely, and if $\alpha<1$ then $\mathcal{G}_\mu$ has infinitely many connected components almost surely. 
\end{proposition}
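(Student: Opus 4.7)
The plan is to reduce connectivity in $\mathcal{G}_\mu$ to a hitting problem for a random walk and then handle the two cases separately. By tracing the later vertex's ancestral line down to the earlier time, the event $(n_1,t_1)\sim(n_2,t_2)$ reduces to same-row connectivity, and $(n_1,0)\sim(n_2,0)$ occurs if and only if the difference walk $W_1-W_2$ (with i.i.d.\ steps distributed as $X-Y$, $X,Y\sim\mu$) hits $n_1-n_2$, where $W_1$ and $W_2$ are the partial jump sums along the two ancestral lines; they remain independent up to the merger time because the two lines visit disjoint space--time vertices until they first coincide. By symmetry of $\mu$ the difference walk is symmetric, it has the same tail class $\Gamma_\alpha$, and by condition (3) its step law generates $\Z$ as a group.

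For $\alpha>1$, the tail bound yields $\E|X|=\sum_n \mu(\{n,n+1,\dots\})<\infty$, so $\E[X]=0$ by symmetry and Chung--Fuchs gives recurrence of the walk on $\Z$. The same applies to the difference walk, which is irreducible on $\Z$ and therefore hits every integer almost surely. Hence $(n_1,0)\sim(n_2,0)$ a.s.\ for each pair, and a countable union of null sets over all pairs in $\Z^2$ shows $\mathcal{G}_\mu$ has a single component almost surely.

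For $\alpha<1$, the heart of the argument is to show that $p_n:=P((0,0)\sim(n,0))\to 0$ as $|n|\to\infty$. Transience of the difference walk follows from the Fourier recurrence criterion together with the asymptotic $1-\hat\mu(\theta)\sim c_\alpha|\theta|^\alpha$ near $\theta=0$, which makes $\int_{-\pi}^{\pi}\mathrm{Re}((1-\hat\mu(\theta))^{-1})\,d\theta$ finite precisely when $\alpha<1$. Writing $p_n=G(0,n)/G(0,0)$ for the Green's function of the difference walk, the local limit theorem for walks in the domain of attraction of an $\alpha$-stable law gives $P(S_k=n)\sim k^{-1/\alpha}g_\alpha(n/k^{1/\alpha})$, and summing over $k$ yields $G(0,n)=O(|n|^{\alpha-1})\to 0$.

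With $p_n\to 0$, I would close with a Cauchy--Schwarz counting argument. Let $A_N$ be the number of distinct connected components represented in $\{(0,0),(1,0),\dots,(N-1,0)\}$ and let $X_i=|\text{component of }(i,0)\cap[0,N)\times\{0\}|$. Then $\sum_i X_i=\sum_j s_j^2$, where $s_j$ are the sizes of the distinct components appearing in the first $N$ vertices of row $0$, so Cauchy--Schwarz gives $A_N\sum_i X_i\ge N^2$. Horizontal translation invariance yields $\E[\sum_i X_i]=\sum_{|k|<N}(N-|k|)p_{|k|}=o(N^2)$ by Ces\`aro, and Markov's inequality then gives $A_N\to\infty$ in probability. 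Since the total number of components dominates $A_N$, this forces infinitely many components almost surely. The main obstacle I expect is the Green's function decay for the transient difference walk; the reduction to the hitting problem and the final counting argument are then routine.
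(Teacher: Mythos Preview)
Your proof is correct and follows a genuinely different route from the paper's. You reduce $(n_1,0)\sim(n_2,0)$ to a hitting problem for the difference of two independent $\mu$-walks, then use Chung--Fuchs for $\alpha>1$ and a Green's function estimate for $\alpha<1$, finishing with a Cauchy--Schwarz counting argument to get infinitely many components. The paper instead works throughout with the expected intersection size $\E|A_{(0,0)}\cap \tilde A_{(0,0)}|=\|Q\|_{L^2(\T^2)}^2$ (where $Q(x,y)=(1-e^{-iy}P(x))^{-1}$) and the identity $\P((k,0)\sim(0,0))=\E|A_{(0,0)}\cap A_{(k,0)}|/\|Q\|_{L^2}^2$; it decides finiteness of $\|Q\|_{L^2}^2$ directly from the asymptotic $1-P(x)\sim c_\alpha x^\alpha L(1/x)$, gets $p_k\to 0$ for $\alpha<1$ from $\sum_{n,t} q_{n-k,t}q_{n,t}\to 0$ (a Riemann--Lebesgue consequence of $q\in\ell^2$), and closes with Borel--Cantelli along a sparse sequence. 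The paper's route has the advantage that the identity $\P(\sim)=\E|\cap|/\|Q\|_2^2$ is exactly what drives the covariance computation in the next section, so it is more economical in context; your route is more self-contained and closer to textbook random-walk theory.

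One step to tighten: ``summing over $k$ yields $G(0,n)=O(|n|^{\alpha-1})$'' needs care, since the uniform $o(a_k^{-1})$ error in the local limit theorem does not obviously sum to something vanishing in $n$. A clean fix is to split $\sum_k P(S_k=n)$ at a large $K$ (the finite part $\to 0$ trivially; the tail is at most $\sum_{k>K}\|p_k\|_\infty\le C\sum_{k>K}a_k^{-1}$, small since $\alpha<1$), or simply write $G(0,n)=(2\pi)^{-1}\int_{-\pi}^{\pi} e^{-in\theta}(1-P(\theta)^2)^{-1}\,d\theta$ and invoke Riemann--Lebesgue. Also, your claim that the difference-walk step generates $\Z$ can fail when $\supp(\mu)$ lies in the odd integers (gcd $1$ but period $2$); the paper's one-line appeal to gcd $=1$ has the same blind spot, so this is a shared edge case rather than a defect specific to your argument.
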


\begin{proof}
Let $A_{(0,0)}$ be the ancestral line of $(0,0)$ and let $\Tilde{A}_{(0,0)}$ be an i.i.d. sample of $A_{(0,0)}$.

\textit{Claim 1:} If $\E |A_{(0,0)}\cap\Tilde{A}_{(0,0)}| = \infty$ then $\mathcal{G}_\mu$ has one connected component almost surely. If $\E|A_{(0,0)}\cap\Tilde{A}_{(0,0)}| < \infty$, 
then $\mathcal{G}_\mu$ has infinitely many connected components almost surely. 

Since the greatest common divisor of the support of $\mu$ is $1$, we have that $\E|A_{(0,0)}\cap\Tilde{A}_{(0,0)}| = \infty$ implies $\P((n,0)\sim (0,0))=1$ for each $n\in\Z$. 
So $\mathcal{G}_\mu$ has 1 connected component almost surely. On the other hand if $\E|A_{(0,0)}\cap \Tilde{A}_{(0,0)}|< \infty$ then 
we argue that $\P((k,0)\sim (0,0))\to 0$ as $k\to \infty$. To see this let $q_{n,t} := \P((n,t)\in A_{(0,0)})$ and note that 
$$\E|A_{(0,0)}\cap\Tilde{A}_{(0,0)}| = 1+\sum_{n\in\Z, t<0} q_{n,t}^2<\infty.$$
This implies that if $A_{(k,0)}$ is an ancestral line started at $(k,0)$ and 
\textit{independent} of $A_{(0,0)}$ 
$$\E|A_{(0,0)}\cap A_{(k,0)}|\to 0$$ 
as $k\to\infty$, since 
$$\E|A_{(0,0)}\cap A_{(k,0)}| = \sum_{n\in\Z, t<0} q_{n-k,t}q_{n,t}\to 0\ as\ k\to\infty.$$
But we have the relation
$$\E|A_{(k,0)}\cap A_{(0,0)}| = \P((k,0)\sim (0,0))\E|A_{(0,0)}\cap\Tilde{A}_{(0,0)}|,$$
which implies that $\P((k,0)\sim (0,0))\to 0$ as $k\to \infty$. Now we argue that
if a sequence $a_0<a_1<...\to \infty$ increases rapidly enough, the points $(a_i,0)$ lie in infinitely many distinct
connected components almost surely. This is an application of Borel-Cantelli. Define the events
$$E_l:=\{(a_l,0)\not\sim (a_j,0)\ \forall 0\leq j<l\}.$$
In words, $E_l$ is the event that $(a_l,0)$ does not lie in the same connected
component as any $(a_j,0)$ with $j\in [0,l)$. 
Since $\P((k,0)\sim (0,0))\to 0$ as $k\to \infty$, if the 
sequence $(a_n)$ increases rapidly enough we have that $\P(E_l^c)\leq \sum_{k=0}^l \P((a_l,0)\sim (a_k,0))\leq \frac{10}{l^2}$. 
Now by Borel-Cantelli infinitely many $E_j$ happen almost surely and hence 
$\mathcal{G}_\mu$ has infinitely many connected components almost surely.  

\textit{Claim 2:} $\E|A_{(0,0)}\cap\Tilde{A}_{(0,0)}| = \infty$ if $\alpha > 1$ and $\E|A_{(0,0)}\cap\Tilde{A}_{(0,0)}| < \infty$ if  $\alpha <1$. 

Let $p_n := \mu (\{n\})$ and define the functions $P:\mathbb{R}/2\pi\mathbb{Z}\to \mathbb{C}$,
$Q:(\mathbb{R}/2\pi\mathbb{Z})^2\to\mathbb{C}$ by 
\begin{align}
\label{def: P and Q}
P(x)
& := \sum_{n\in\mathbb{Z}}e^{inx}\\
Q(x,y) & := \frac{1}{1-e^{-iy}P(x)}.
\end{align}
Note that $\widehat{Q}(n,t) = q_{n,t} = \P((n,t)\in A_{(0,0)})$ for all $(n,t)\in \mathbb{Z}\times\mathbb{Z}_{< 0}$, and thus 
$$\E|A_{(0,0)}\cap\Tilde{A}_{(0,0)}| = 1+ \sum_{(n,t)\in\mathbb{Z}\times\mathbb{Z}_{<0}} q_{n,t}^2 = ||Q||^2_{L^2(\mathbb{T}^2)}.$$
This allows us to relate $\mathbb{E}|A_{(0,0)}\cap\Tilde{A}_{(0,0)}|$ to 
the behavior of $P(x)$, which is just the characteristic function of $\mu$, near $0$. 
Since $\mu\in \Gamma_\alpha$ we have by Theorem 1 in \cite{GL}
\begin{equation}\label{nearzero}
|1-P(x)| = c_\alpha x^\alpha L(x^{-1}) + o(x^{\alpha}L(x^{-1}))
\end{equation}
as $x\to 0$, where 
\begin{equation}
\label{eq: calpha}
    c_\alpha := \cos\left(\frac{\alpha\pi}{2}\right) \Gamma(1-\alpha).
\end{equation}
Using these asympototics we see that $\int_{\mathbb{T}^2}\frac{1}{|1-e^{-iy}P(x)|^2}\, dx\, dy = \infty$ if $\alpha >1$, and is finite if $\alpha <1$. 
\end{proof}
We note that when $\alpha =1$ the number of connected components of $\mathcal{G}_\mu$ can depend on the slowly varying function $L(x)$. 

\section{Asymptotics for the Variance}

For the remainder of the paper we assume $\mu\in\Gamma_\alpha$.

\begin{lemma}\label{variances} For any $p\in(0,1)$ and $t,x_1,x_2\in\R$ we have 
\begin{equation}
\label{covar}
\begin{aligned}
\Cov\Big(S
& (\lfloor x_1n\rfloor,0),\ S(\lfloor x_2n\rfloor ,\lfloor tL(n)^{-1}n^{\alpha}\rfloor)\Big) \\
& = \tilde{c}_p n^{1+\alpha}L(n)^{-1}\Big(V(t,x_1)|x_1|^{1+\alpha}+V(t,x_2)|x_2|^{1+\alpha} - V(t,|x_2-x_1|)|x_1-x_2|^{1+\alpha}\Big) 
+ o(n^{1+\alpha}L(n)^{-1}),
\end{aligned}
\end{equation}
where 
\begin{align*}
V(t,x)  
& := \int_0^{\infty}\frac{1-\cos(u)}  {u^{2+\alpha}}e^{-c_\alpha\frac{t}{x^\alpha}u^\alpha}\, du \\
\tilde{c}_p & := \frac{2p(1-p)}{\pi c_\alpha ||Q||_{L^2}^2},
\end{align*}
and the constant $c_\alpha$ is given in $\eqref{eq: calpha}$. 
\end{lemma}

\begin{remark}
Lemma 3.1 says that the correct scale of $t$ to study the fluctuations of $S(n,t)-S(n,0)$
is $t\sim n^\alpha$. This may be surprising at first glance. A heuristic explanation for 
this timescale is that a random 
walk with step distribution $\mu$ will take $\sim n^\alpha$ steps to leave a 
neighborhood of size $n$. So if $t<<n^\alpha$, the ancestral line of 
almost all any point in $[0,n]\times\{t\}$ will hit some point in $[0,n]\times\{0\}$,
and so for $t<<n^\alpha$, $S(n,t)$ and $S(n,0)$ with me strongly linked. 
Lemmas \ref{heatkernel} and \ref{dissipative} make this heuristic more precise.
\end{remark} 
We will make use of the following classical fact about slowly varying functions. See Theorem 1.5.6 in \cite{BGT} for a proof.

\begin{theorem} [Potter's Theorem]
\label{thm: Potter's}
If $L:(0,\infty)\to(0,\infty)$ is a slowly varying function, 
as defined in definition \ref{def: measure class}, then for each $\epsilon>0$ there exists 
an $x_0>0$ such that for all $x,y>x_0$
$$\frac{L(x)}{L(y)}\leq 2 \max \{(\frac{x}{y})^\epsilon, (\frac{y}{x})^\epsilon\}.$$
\end{theorem}

\begin{proof}[Proof of Lemma \ref{covar}]
We write the covariance above as as a weighted sum of the probabilities $\P((k,t)\sim (0,0))$, and
then deduce the asymptotics of this weighted sum from asynmptotics of the characteristic function of $\mu$. 

By symmetry and translation invariance of the law of $X_p$, it suffices to consider $x_1,x_2,t\in \R_{\geq 0}$. Let $n_1= \lfloor x_1n\rfloor,\ n_2 = \lfloor x_2n\rfloor$, $t_1 = \lfloor tL(n)^{-1}n^{\alpha}\rfloor$ and compute
\begin{align*}
\Cov\Big(S(n_1,0),\ S(n_2,t_1)\Big)
& = \sum_{0\leq k_2\leq n_2}\sum_{0\leq k_1\leq n_1}\Cov(X_p(k_1,0),\ X_p(k_2,t_1))\\
& = 4p(1-p) \sum_{0\leq k_2\leq n_2}\sum_{0\leq k_1\leq n_1} \P((k_1-k_2, t_1)\sim(0,0)) .
\end{align*}
Counting the number of pairs $(k_1,k_2)\in [0,n_1]\times [0,n_2]$ such that $|k_1-k_2|=k$ we see that 
\begin{multline}\label{fouriersum}
\Cov\Big(S(n_1,0),\ S(n_2,t_1)\Big) = 4p(1-p)\Bigg(\sum_{k=0}^{n_1} (n_1-k)\P((k,t_1)\sim(0,0)) + \sum_{k=0}^{n_2} (n_2-k)\P((k,t_1)\sim(0,0)) \\
- \sum_{k=0}^{|n_2-n_1|} (|n_2-n_1|-k)\P((k,t_1)\sim(0,0)) \Bigg)+ O(n).
\end{multline}
It suffices to work out the asymptotics for $\sum_{k=0}^{n_1}(n_1-k)\P((k,t_1)\sim (0,0))$ since the other sums are of the same form. The key observation is that if $\Tilde{A}_{(0,0)}$, $A_{(0,0)}$, and $A_{(k,t)}$ are all independent ancestral lines started at $(0,0),\ (0,0),\ $ and $(k,t)$, respectively, then 
$$\frac{\E |A_{(0,0)}\cap A_{(k,t)}|}{\E |A_{(0,0)}\cap \Tilde{A}_{(0,0)}|} = \P((0,0)\sim (k,t)).$$ 
The above relation allows us to relate the asymptotics of the above sums to integral expressions involving the characteristic function of $\mu$. 
Recall the characteristic function $P:\mathbb{R}/2\pi\mathbb{Z}\to \mathbb{C}$, of $\mu$, given by 
\begin{equation*} 
    P(x) = \sum_{n\in\Z}\mu(\{n\}) e^{inx}.
\end{equation*}
We have that for each $t\in \mathbb{Z}_{\geq 0}$
$$\E |A_{(0,0)}\cap A_{(k,t)}| = \sum_{s=0}^{\infty} \widehat{P^{t+2s}}(k) = \widehat{\frac{P^{t}}{1-P^2}}(k),$$ 
and so we can compute for any $t_1\geq 0$
\begin{align*}
\sum_{k=0}^{n_1} (n_1-k)\P((k,t_1)\sim (0,0)) &= \frac{1}{\E |A_{(0,0)}\cap \Tilde{A}_{(0,0)}|}\sum_{k=0}^{n_1} (n_1-k)\E |A_{(0,0)}\cap A_{(k,t_1)}| \\
&   = \frac{1}{||Q||^2_{L^2}}\sum_{k=0}^{n_1} (n_1-k)\widehat{\frac{P^{t_1}}{1-P^2}}(k)\\
&   = \frac{1}{2\pi ||Q||^2_{L^2}}\int_0^{2\pi} \left(\sum_{k=0}^{n_1} (n_1-k)\cos(kx)\right)\frac{P^{t_1}(x)}{1-P^2(x)}dx.
\end{align*}
The function $Q(x,y)$ was defined in \eqref{def: P and Q}. The sum inside the integral is $\frac{n_1}{2} F_{n_1}(x)+\frac{n_1}{2}$, where $F_N(x) := \sum_{|k|\leq N} \big(1-\frac{|k|}{N}\big)e^{ikx} = \frac{1}{N}\big(\frac{1-\cos(Nx)}{1-\cos(x)}\big)$ 
is the well known Fejér kernel. Thus
\begin{equation}\label{integralexpression}
\begin{split}
\sum_{k=0}^{n_1} (n_1-k)\P((k,t_1)\sim (0,0)) 
& = \frac{n_1}{2\pi ||Q||^2_{L^2}}\int_0^\pi \frac{1-\cos(n_1x)}{n_1(1-\cos(x))}\frac{P^{t_1}(x)}{1-P^2(x)}dx + O(n_1)\\
& = \frac{n_1^{1+\alpha}L(n_1)^{-1}}{2\pi ||Q||_2^2}\int_{0}^{\pi n_1}\frac{1-\cos(u)}{n_1^2\big(1-\cos\big(\frac{u}{n_1}\big)\big)}P^{t_1}\Big(\frac{u}{n_1}\Big)\frac{L(n_1)}{n_1^{\alpha}\big(1-P^2\big(\frac{u}{n_1}\big)\big)}du + O(n_1)\\
& := \frac{n_1^{1+\alpha}L(n_1)^{-1}}{2\pi ||Q||_2^2} \int_{0}^{\pi n_1} A_n(u)B_n(u)C_n(u) + O(n_1),
\end{split}
\end{equation}
using that the integrand was even to integrate from $0$ to $\pi$ and making the change of variables $u=n_1x$ in the second step. Recall that $(n_1,t_1) = (\lfloor x_1n\rfloor , \lfloor tL(n)^{-1}n^{\alpha}\rfloor)$. 

To compute the asymptotics in $n$ we claim we may apply dominated convergence. The standard bound on the Fejér kernel implies $|F_N(x)|\leq C\min(N,\frac{1}{Nx^2})$ (see \cite{Schlag} chapter 1) 
and by definition $|P(x)|\leq 1$ so that $|B_n(u)A_{n}(u)|\leq \frac{C}{1+u^2}$. 
Further by (\ref{nearzero}) and Potter's Theorem, Theorem \ref{thm: Potter's} above, applied to $C_n(u)$ we obtain
$$|A_n(u)B_n(u)C_n(u)|\leq \Big(\frac{C}{u^{\alpha}(1+u^2)}\Big)\frac{L(n_1)}{L(\frac{n_1}{u})}\leq 
\frac{C(u^{\epsilon}+u^{-\epsilon})}{u^{\alpha}(1+u^2)}$$
for any $\epsilon>0$ where $C$ may depend on $\epsilon$, $x_1$ and $t$. The above function is clearly integrable for $\epsilon$ small, so it suffices to compute the pointwise limits of $A_n(u), B_n(u),$ and $C_n(u)$. Fix $u\in\R$ and estimate
$$A_n(u) =  \frac{1-\cos(u)}{n_1^2\big(1-\cos\big(\frac{u}{n_1}\big)\big)} = \frac{2(1-\cos(u))}{u^2} + O(n_1^{-2}).$$
For $B_n$ we use the asymptotics (\ref{nearzero}) and the fact that $(1-x)^{x}\to e^{-1}$ as $x\to 0$ to estimate
\begin{align*}
B_n(u) 
& = \Big(1- \Big(1-P\Big(\frac{u}{n_1}\Big)\Big))^{t_1}\\
& = e^{-c_\alpha\frac{t}{x^\alpha}u^{\alpha}} + o(1),
\end{align*}
where $c_\alpha$ is as used in (\ref{nearzero}). Lastly, for $C_n$ we again use the asymptotics (\ref{nearzero}) to get
\begin{align*}
C_n(u) 
& = \frac{L(n_1)}{n_1^\alpha(1-P^2(\frac{u}{n_1}))}\\
&  = \Big(\frac{1}{1+P(\frac{u}{n_1})}\Big)\frac{L(n_1)}{n_1^\alpha\big(1-P\big(\frac{u}{n_1}\big)\big)}\\
& = \frac{1}{2c_\alpha u^{\alpha}} + o(1)
\end{align*}
Plugging this into (\ref{integralexpression}) gives
$$\sum_{k=0}^{n_1} (n_1-k)\P((k,t_1)\sim (0,0)) = \frac{n_1^{1+\alpha}L(n_1)^{-1}}{2c_\alpha \pi ||Q||_2^2}\int_{0}^{\infty} \frac{1-\cos(u)}{u^{2+\alpha}} e^{-\frac{c_\alpha t}{x^\alpha}u^\alpha}du + O(n_1).$$
Computing the asymptotics of the other terms in (\ref{fouriersum}) and recalling that $L(x)$ is slowly varying yields the claim.
\end{proof}

\section{Estimates on the Random Graph $\mathcal{G}_\mu$}

To apply a Lindeberg swapping type argument, the main technical obstacle is to ensure that the 
connected components are roughly the same size. This is achieved by the second moment 
and covariance estimates in this section, which rest on the heat kernel bounds proved below. 

\subsection{ A Heat Kernel Estimate}
We will need the following estimate for random walks with jump kernel $\mu$. 
In some sense, it quantifies the rate of transience of the random walk generated by $\mu$. 

\begin{lemma}\label{heatkernel}
     Let $p_t(x,y)$ be the $t$-step transition kernel of the random walk on 
     $\Z$ generated by $\mu$ and started at $x$. Then for each $\epsilon >0$, 
     there exists $C>0$ depending on $\epsilon$, such that
\begin{equation}\label{infbound}
    ||p_t(x,y)||_\infty \leq Ct^{-1/(\alpha+\eps)}.
\end{equation}
\end{lemma}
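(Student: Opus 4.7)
The plan is to bound $p_t(x,y)$ via Fourier inversion and exploit the precise rate at which $P(\theta) = \sum_n \mu(\{n\})e^{in\theta}$ approaches $1$ near the origin. Since $\mu$ is symmetric, $P$ is real-valued, and Fourier inversion gives
\[
p_t(x,y) \;=\; \frac{1}{2\pi} \int_{-\pi}^{\pi} P(\theta)^t\, e^{-i(x-y)\theta}\, d\theta,
\]
so that
\[
\|p_t(x,y)\|_\infty \;\leq\; \frac{1}{2\pi} \int_{-\pi}^{\pi} |P(\theta)|^t\, d\theta.
\]
I would then split this integral into a ``bulk'' piece on $|\theta|\leq\delta$ and a ``tail'' piece on $\delta\leq|\theta|\leq\pi$, where $\delta>0$ is a small fixed constant.

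On the tail piece, I would use condition $(3)$ from the definition of $\Gamma_\alpha$: since $\gcd(\supp(\mu))=1$, we have $|P(\theta)|<1$ for all $\theta\in(-\pi,\pi]\setminus\{0\}$, and by continuity $\sup_{\delta\leq|\theta|\leq\pi}|P(\theta)| \leq 1-\eta$ for some $\eta>0$. The contribution from this region is therefore $O((1-\eta)^t)$, which beats any power of $t$ and is negligible.

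On the bulk piece, the key input is the asymptotic \eqref{nearzero}, which (because $P$ is real) gives $P(\theta) = 1 - c_\alpha|\theta|^\alpha L(|\theta|^{-1}) + o(|\theta|^\alpha L(|\theta|^{-1}))$. Choosing $\delta$ small enough, we obtain $|P(\theta)|^t \leq \exp(-c\, t\, |\theta|^\alpha L(|\theta|^{-1}))$ for $|\theta|\leq\delta$. Here the slowly varying factor $L(|\theta|^{-1})$ is the main nuisance. The way around it is Potter's theorem (Theorem 1.5.6 of \cite{BGT}): for any $\epsilon>0$ there is $c_\epsilon>0$ such that $L(|\theta|^{-1}) \geq c_\epsilon|\theta|^{\epsilon}$ for all small $|\theta|$ (we need the lower bound $|\theta|^{+\epsilon}$ since $|\theta|^{-1}\to\infty$). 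Substituting yields
\[
\int_0^{\delta} e^{-c\,t\,\theta^\alpha L(\theta^{-1})}\, d\theta \;\leq\; \int_0^{\delta} e^{-c'\,t\,\theta^{\alpha+\epsilon}}\, d\theta.
\]
The change of variables $\theta = (c' t)^{-1/(\alpha+\epsilon)} s$ then converts this into $(c't)^{-1/(\alpha+\epsilon)}\int_0^{\infty} e^{-s^{\alpha+\epsilon}}\, ds$, giving the claimed bound $Ct^{-1/(\alpha+\epsilon)}$.

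The only subtle point is the application of Potter's theorem in the right direction; the absence of pointwise bounds on $L$ is precisely what forces the loss of $\epsilon$ in the exponent. Everything else is routine Fourier analysis. I do not expect any real obstacle beyond bookkeeping; the argument is essentially the standard local limit theorem heuristic adapted to a slowly varying correction.
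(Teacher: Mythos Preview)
Your proposal is correct and follows essentially the same approach as the paper: Fourier inversion, the asymptotic $1-P(\theta)\sim c_\alpha|\theta|^\alpha L(|\theta|^{-1})$ near the origin, Potter's bound to absorb the slowly varying factor at the cost of an $\epsilon$, and a change of variables to extract the $t^{-1/(\alpha+\epsilon)}$ scaling. The only cosmetic difference is that the paper packages the near-zero and away-from-zero estimates into a single global inequality $|P(x)|\leq 1-c|x|^{\alpha+\epsilon}$ on $[-\pi,\pi]$ and then splits the integral at the $t$-dependent scale $|x|=t^{-1/(\alpha+\epsilon)}$, whereas you split at a fixed $\delta$ and treat the two regions separately.
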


Our proof adapts the proof of the local limit law for stable laws given in \cite{Feller}.
We remark that we were unable to find a proof of Lemma \ref{heatkernel} 
in the literature, but under \textit{stronger} assumptions on $\mu$ much more is known to be true. 
For instance if $\mu$ satisfies the pointwise bounds
$$\frac{\kappa^{-1}}{(1+|n|)^{1+\alpha}}\leq p_n = \mu(\{n\}) \leq \frac{\kappa}{(1+|n|)^{1+\alpha}},$$
then it is known (see \cite{Levin} for instance) that
\begin{equation}
    c_2\min \left(t^{-1/\alpha}, \frac{n}{|n|^{1+\alpha}}\right)\leq p_t(0,n) = q_{(n,-t)} \leq c_1\min\left(t^{-1/\alpha}, \frac{n}{|n|^{1+\alpha}}\right).
\end{equation}
Alternatively, if $\widehat{\mu}$ admits an expansion near $0$ of the form 
$$\widehat{\mu}(\xi) = 1-a|\xi|^\alpha + ib\sgn(\xi)|\xi|^\alpha + O(|\xi|^{\alpha +\delta}),$$
for some $a,b\in\mathbb{R}$ and $\delta>0$ then one can prove similar results. See \cite{Jara}. 
However, a general measure in $\Gamma_\alpha$ may not satisfy
either of these conditions. 

\begin{proof}
Fix $\epsilon>0$ sufficiently small, and recall the characteristic function of $\mu$
    $$P(x) = \hat{\mu}(x) = \sum_{m\in\Z}e^{i mx}\mu(\{m\}).$$
By (2.3) combined with Theorem \ref{thm: Potter's} (Potter's Theorem), we have
the following estimate on $P(x)$
\begin{equation}
    |P(x)|\leq 1-c|x|^{\alpha+\epsilon}
\end{equation}
for each $x\in [-\pi,\pi]$. The constant $c$ may depend on $\epsilon$ and $\mu$. Writing $p_t(0,\cdot)$ in terms of $P$ by Fourier inversion and using the above bound gives
\begin{align*}
    |p_t(0,n)|  
    & =  |\frac{1}{2\pi} \int_{0}^{2\pi} e^{-inx}P^t(x)\, dx|\\
    & \leq \frac{1}{2\pi}\int_{|x|\leq t^{-1/(\alpha+\epsilon)}} (1-c|x|^{\alpha+\epsilon})^t \ dx + \frac{1}{2\pi}\int_{t^{-1/(\alpha+\epsilon)}< |x|\leq \pi} (1-c|x|^{\alpha+\epsilon})^t\, dx\\
    & \leq  Ct^{-1/(\alpha+ \epsilon)} + C\int_{t^{-1/(\alpha+\epsilon)}< |x|\leq \pi} (1-c|x|^{\alpha+\epsilon})^t\, dx.
\end{align*}
The second integral is estimated with the change of variables $x=yt^{-1/(\alpha+\epsilon)}$ to get
\begin{align*}
    \int_{t^{-1/(\alpha+\epsilon)}< |x|\leq \pi} (1-|x|^{\alpha + \eps})^t\, dx 
    & = 2t^{-1/(\alpha + \eps)}\int_{1}^{\pi t^{1/(\alpha + \epsilon)}}\Big(1-c\frac{y^{\alpha+\epsilon}}{t}\Big)^t\, dy\\
    & \leq Ct^{-1/(\alpha + \eps)}\int_{1}^{\infty} e^{-cy^{\alpha + \epsilon}}\, dy \\
    & \leq Ct^{-1/(\alpha + \epsilon)}.
\end{align*}
\end{proof}
Using Lemma \ref{heatkernel} we prove the following estimate for the expected amount of time the random walk 
started at $k\in\Z$ and generated by $\mu$ spends in any interval of length $n$.
\begin{lemma}\label{dissipative}
Fix $\epsilon >0$. Then there exists $C>0$, depending on $\epsilon$, such that for any $k\in \Z$ and $n\in \Z_{> 0}$ we have
\begin{align}\label{twopoint}
    \sum_{0\leq n_1\leq n} \sum_{t\geq 0} p_t(0,n_1-k)\leq Cn^{\alpha+\epsilon}.
\end{align}
\end{lemma}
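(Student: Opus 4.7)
The plan is to swap the order of summation, writing
$$\sum_{0\leq n_1\leq n}\sum_{t\geq 0} p_t(0,n_1-k) \;=\; \sum_{t\geq 0} \sum_{n_1=0}^{n} p_t(0,n_1-k),$$
and then bound the inner sum for each $t$ in two different ways depending on whether $t$ is small or large. Specifically, for each $t$ we have the trivial bound $\sum_{n_1=0}^n p_t(0,n_1-k)\leq 1$ coming from the fact that $p_t(0,\cdot)$ is a probability mass function, and the uniform bound $\sum_{n_1=0}^n p_t(0,n_1-k)\leq (n+1)\|p_t(0,\cdot)\|_\infty \leq C(n+1)t^{-1/(\alpha+\epsilon')}$ from Lemma \ref{heatkernel}, where $\epsilon'>0$ is a parameter I will choose small. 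Both bounds are \emph{uniform in} $k$, which is crucial since the lemma requires uniformity.

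Next I would split the outer sum at the value $T = \lceil n^{\alpha+\epsilon'}\rceil$, where the two bounds become comparable. For $t\leq T$ the trivial bound gives a contribution at most $T\leq Cn^{\alpha+\epsilon'}$. For $t>T$, choosing $\epsilon'$ small enough that $\alpha+\epsilon'<1$ (which is possible since $\alpha<1$), one has $1/(\alpha+\epsilon')>1$, so
$$\sum_{t>T} C(n+1)\, t^{-1/(\alpha+\epsilon')} \;\leq\; C(n+1)\int_{T}^{\infty} t^{-1/(\alpha+\epsilon')}\,dt \;\leq\; C(n+1)\, T^{\,1-1/(\alpha+\epsilon')}.$$
With $T\asymp n^{\alpha+\epsilon'}$, the exponent works out to $(n+1)\cdot n^{(\alpha+\epsilon')-1} \asymp n^{\alpha+\epsilon'}$, matching the target. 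To conclude for a given $\epsilon$, I take $\epsilon' = \min(\epsilon, (1-\alpha)/2)$ and absorb the resulting constant into $C$; since $n^{\alpha+\epsilon'}\leq n^{\alpha+\epsilon}$ for $n\geq 1$, the stated bound follows.

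The main subtlety is ensuring $\epsilon'$ can be taken strictly less than $1-\alpha$ so that the $t$-integral converges; the bound we obtain is never better than $n^{\alpha+\epsilon}$ for some positive $\epsilon$, which is exactly why the lemma is stated with an arbitrary loss $\epsilon$ rather than at the critical exponent $\alpha$. Beyond this, the argument is routine: the $k$-dependence drops out because the estimate uses only $\|p_t(0,\cdot)\|_\infty$ and total mass, and no finer pointwise information on $p_t$ is needed thanks to the $L^\infty$ bound from Lemma \ref{heatkernel}.
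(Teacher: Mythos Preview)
Your proposal is correct and follows essentially the same approach as the paper: swap the order of summation, use the trivial mass bound $\sum_{n_1} p_t(0,n_1-k)\le 1$ for small $t$ and the $L^\infty$ heat-kernel bound from Lemma~\ref{heatkernel} for large $t$, splitting at $t\asymp n^{\alpha+\epsilon}$. The paper handles the constraint $\alpha+\epsilon<1$ by simply assuming $0<\epsilon<1-\alpha$ at the outset, which is equivalent to your choice of an auxiliary $\epsilon'$.
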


\begin{proof}
Without loss of generality we may take $0<\epsilon<1-\alpha$. By Lemma \ref{heatkernel} we have for each $t\geq 1$
$$\sum_{0\leq n_1\leq n} p_t(0,n_1-k)\leq C n t^{-1/(\epsilon+\alpha)}.$$
We also have the bound 
$$\sum_{0\leq n_1\leq n} p_t(0,n_1-k)\leq 1,$$
for all $t\geq 0$.
Using both we estimate, using that $\epsilon<1-\alpha$
\begin{align*}
    \sum_{t\geq 0} \sum_{0\leq n_1\leq n}p_t(0,n_1-k)
    & \leq \sum_{0\leq t\leq n^{\alpha+\epsilon}}\sum_{0\leq n_1\leq n}p_t(0,n_1-k) + \sum_{n^{\alpha+\epsilon}\leq t}\sum_{0\leq n_1\leq n}p_t(0,n_1-k)\\
    & \leq n^{\alpha + \epsilon} + C n\sum_{n^{\alpha+\epsilon}\leq t}t^{-1/(\alpha + \epsilon)}\\
    & \leq Cn^{\alpha + \epsilon}
\end{align*}
\end{proof}

\subsection{Moment Bounds on Connected Components} 
For any $S\subset \Z^2$ and $q\in\Z^2$ let $T_q^S = \{q_0\in S| q\sim q_0\}$. $T_q^S$ is the set of points in $S$ which are in the same connected component of $\mathcal{G}_\mu$ as $q$. In this section we establish the following two estimates.
\begin{lemma}\label{momentbounds}
Fix $\epsilon>0$ and $S= [0,n]\times \{0\}$. Then there exists $C>0$, depending on $\epsilon$, such that for each $q\in S$ and $n\in \Z_{>0}$
\begin{align}
\E |T_q^S|^2 \leq Cn^{2\alpha + \epsilon}.
\end{align}
\end{lemma}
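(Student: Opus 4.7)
\emph{Proof proposal.} I will expand
\[
\E|T_q^S|^2 = \sum_{q_1, q_2 \in S} \P(q \sim q_1,\ q \sim q_2),
\]
and bound each summand by reducing to events involving \emph{three mutually independent} random walks. Let $W^q, W^{q_1}, W^{q_2}$ be walks with step distribution $\mu$ starting at $q, q_1, q_2$, sampled independently, and set $E_{ij} = \{W^{q_i} \cap W^{q_j} \neq \emptyset\}$ (intersection of space-time traces). Couple these walks to the actual ancestral lines in $\mathcal G_\mu$: take $L_q = W^q$, let $L_{q_1}$ follow $W^{q_1}$ until it first touches $L_q$ (then coalesce), and let $L_{q_2}$ follow $W^{q_2}$ until it first touches $L_q \cup L_{q_1}$ (then coalesce). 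Since in this coupling $L_q \cup L_{q_1} \subseteq W^q \cup W^{q_1}$, we get the inclusion
\[
\{q \sim q_1,\ q \sim q_2\} \subseteq E_{01} \cap (E_{02} \cup E_{12}).
\]

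Using the elementary bound $\mathbf 1_{A \geq 1}\mathbf 1_{B \geq 1} \leq AB$ for nonnegative integer $A,B$ together with the mutual independence of the three walks,
\[
\P(E_{01} \cap E_{02}) \leq \E\bigl[|W^q \cap W^{q_1}| \cdot |W^q \cap W^{q_2}|\bigr] = \sum_{p_1,p_2} \P(p_1, p_2 \in W^q)\, \P(p_1 \in W^{q_1})\, \P(p_2 \in W^{q_2}),
\]
and an analogous bound holds for $\P(E_{01} \cap E_{12})$. Summing over $q_1, q_2 \in S = [0,n] \times \{0\}$ and interchanging the order of summation, for $p_i = (n_i, -s_i)$ Lemma \ref{heatkernel} applied to the window of length $n+1$ yields
\[
\sum_{q_i \in S} \P(p_i \in W^{q_i}) = \sum_{k \in [0,n]} p_{s_i}(0, n_i - k) \leq \min\bigl(1,\ Cn\, s_i^{-1/(\alpha+\eps)}\bigr),
\]
while the Markov property of $W^q$ gives $\sum_{n_1, n_2} \P(p_1, p_2 \in W^q) = 1$ for every fixed pair $(s_1, s_2)$.

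Combining these, $\E|T_q^S|^2$ is controlled by a constant times $\bigl(\sum_{s \geq 0} \min(1, Cn s^{-1/(\alpha+\eps)})\bigr)^2$, and the dyadic splitting already used in the proof of Lemma \ref{dissipative} bounds this sum by $Cn^{\alpha+\eps}$, yielding $\E|T_q^S|^2 \leq Cn^{2\alpha + 2\eps}$; relabeling $\eps$ finishes the argument. The main obstacle is justifying the coupling in step one: one must check that the procedure above produces the correct joint law of three ancestral lines in $\mathcal G_\mu$. This rests on the fact that each site of $\mathcal G_\mu$ carries a single outgoing edge, so once two ancestral lines share a site they are forced to follow the same subsequent trajectory --- precisely the coalescence rule used in the coupling. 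The rest is routine summation, with everything else (heat kernel and random-walk range bounds) supplied by Lemmas \ref{heatkernel} and \ref{dissipative}.
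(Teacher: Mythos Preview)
Your argument is correct. The coupling is the standard one for coalescing random walks and is justified exactly by the observation you flag: each site of $\mathcal G_\mu$ carries a single outgoing edge, so once two ancestral lines share a site they coincide forever. From this, $L_q\cup L_{q_1}\subseteq W^q\cup W^{q_1}$ is immediate, and the inclusion $\{q\sim q_1,\ q\sim q_2\}\subseteq E_{01}\cap(E_{02}\cup E_{12})$ follows. The moment bound $\mathbf 1_{A\ge1}\mathbf 1_{B\ge1}\le AB$ and the factorization under independence then reduce everything to sums of the heat kernel over a spatial window of width $n$, and your final summation over $s\ge 0$ is exactly the computation in Lemma~\ref{dissipative}. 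The $E_{01}\cap E_{12}$ term requires a slightly different order of summation than $E_{01}\cap E_{02}$ (sum over $q_2$, then $n_2$, then $q_1$, then $n_1$), but the same two ingredients---$\sum_{k\in[0,n]}p_s(0,m-k)\le\min(1,Cns^{-1/(\alpha+\eps)})$ uniformly in $m$, and $\sum_{m}\P(W(\cdot)=m)=1$---suffice and again give the factor $\bigl(\sum_s\min(1,Cns^{-1/(\alpha+\eps)})\bigr)^2$.

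This is a genuinely different route from the paper's. The paper never passes to fully independent walks; instead it uses the exact identity $\P(q\sim q')=\E|A_q\cap A_{q'}|/\|Q\|_{L^2}^2$ to first prove a ``two-point'' bound $\sum_{n_1\in[0,n]}\P((n_1,0)\sim(j,s))\le Cn^{\alpha+\eps}$, and then decomposes the event $\{0\sim n_1\sim n_2\}$ combinatorially according to which pair of ancestral lines merges first, bounding each of the three resulting terms by iterating the two-point estimate together with the semigroup property of $p_t$. Your coupling bypasses both the $\|Q\|_{L^2}^2$ identity and the explicit semigroup manipulation, replacing them with independence and the trivial fact that a walk occupies one site per time. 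This is more elementary and arguably cleaner for the second-moment bound. The paper's decomposition, on the other hand, records the topology of coalescence explicitly, which is what is needed to extend the argument to the four-point estimate in Lemma~\ref{cov}; adapting your coupling approach to that lemma would still require some analogue of the case analysis by coalescence shape.
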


\begin{lemma}\label{cov}
Fix $\epsilon>0$, and $S = \cup_{j=1}^{k} [0,x_k n]\times\{t_k\}$. Then there exists $C>0$, depending on $x_i$, $t_i$, and $\epsilon$, such that for each $q\in S$ and $n\in \Z_{>0}$
\begin{equation}
    \sum_{q_0\in S} \Cov \Big(|T_q^S|, |T_{q_0}^S|\Big) \leq C n^{3\alpha+\epsilon}.
\end{equation}
\end{lemma}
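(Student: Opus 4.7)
The plan is to expand the covariance sum as a triple sum of indicator covariances, bound each summand via a disjoint-ancestral-lines argument, and then bound the resulting sum by $\E|T_q^S|^3$, which we bound using the heat kernel estimates.

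For Step 1, using $|T_q^S|=\sum_{q_1\in S}1_{q\sim q_1}$ yields
$$\sum_{q_0\in S}\Cov(|T_q^S|,|T_{q_0}^S|)=\sum_{q_0,q_1,q_2\in S}\Cov(1_{q\sim q_1},1_{q_0\sim q_2}).$$
For Step 2, the heart of the argument is the pointwise bound
$$\Cov(1_{q\sim q_1},1_{q_0\sim q_2})\leq \P(q\sim q_0,q\sim q_1,q\sim q_2).$$
To establish this, write $U_1=A_q\cup A_{q_1}$, $U_2=A_{q_0}\cup A_{q_2}$, and set $E_1=\{q\sim q_1\}$, $E_2=\{q_0\sim q_2\}$. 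On $E_1\cap E_2\cap\{q\not\sim q_0\}$ the sets $U_1,U_2$ lie in distinct connected components of $\mathcal{G}_\mu$ and hence are disjoint. For any fixed disjoint $u_1,u_2\subset\Z^2$, the events $\{U_1=u_1\}$ and $\{U_2=u_2\}$ depend on the $J$-values at disjoint vertex sets and therefore factor, yielding
$$\P(E_1\cap E_2\cap\{U_1\cap U_2=\emptyset\})\leq \P(E_1)\P(E_2).$$
Since $E_1\cap E_2\cap\{q\sim q_0\}$ coincides with $\{q\sim q_0,q\sim q_1,q\sim q_2\}$ (using that $q\sim q_0$ and $q_0\sim q_2$ force $q\sim q_2$), subtracting $\P(E_1)\P(E_2)$ from $\P(E_1\cap E_2)$ yields the claimed covariance bound. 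Summing then gives $\sum_{q_0,q_1,q_2\in S}\Cov(1_{q\sim q_1},1_{q_0\sim q_2})\leq \E|T_q^S|^3$.

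For Step 3, expand $\E|T_q^S|^3=\sum_{q_1,q_2,q_3\in S}\P(q\sim q_1,q\sim q_2,q\sim q_3)$ and decompose the four-walk coalescence event by the order of pairwise coalescences (a finite, $n$-independent number of orderings). For one such order, the Markov property gives a bound of the form
$$\sum_{\substack{0>t_1>t_2>t_3\\a_1,a_2,a_3\in\Z}}p_{|t_1|}(q,a_1)p_{|t_1|}(q_1,a_1)p_{|t_2-t_1|}(a_1,a_2)p_{|t_2|}(q_2,a_2)p_{|t_3-t_2|}(a_2,a_3)p_{|t_3|}(q_3,a_3).$$
Summing successively over $q_3\in S$, then $q_2\in S$, then $q_1\in S$, each application of the two-sided bound $\sum_{q\in S}p_t(q,a)\leq\min(1,Cn\,t^{-1/(\alpha+\eps)})$ (from Lemma~\ref{heatkernel}) followed by integration against the preceding heat kernel segment (an application of Lemma~\ref{dissipative}) contributes a factor $Cn^{\alpha+\eps}$. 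The three factors combine to $Cn^{3(\alpha+\eps)}$, and relabeling $\eps$ completes the proof.

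The main technical difficulty is Step 2, specifically justifying the factorization $\P(U_1=u_1,U_2=u_2)=\P(U_1=u_1)\P(U_2=u_2)$ for disjoint $u_1,u_2$. Since $U_1,U_2$ are infinite random subsets of $\Z^2$, a short truncation argument — proving the identity for ancestral lines cut at depth $T$ and letting $T\to\infty$ — is needed to formalize the factorization. Step 3 is comparatively routine given Lemmas \ref{heatkernel} and \ref{dissipative}, though enumerating and bounding each coalescence ordering requires some care.
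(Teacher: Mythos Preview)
Your proof is correct and follows the same overall architecture as the paper: expand the covariance as a triple sum of indicator covariances, bound each summand by the four-point coalescence probability, and then control the resulting four-point sum by decomposing according to coalescence orderings and invoking the heat-kernel and occupation-time estimates (Lemmas~\ref{heatkernel} and~\ref{dissipative}).

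The one substantive difference is in how you establish the key inequality
\[
\Cov(1_{q\sim q_1},1_{q_0\sim q_2})\leq \P(q\sim q_0\sim q_1\sim q_2).
\]
The paper proves this via a coupling: it samples four \emph{independent} ancestral random walks $A_0, A_{q_0}, A_{q_1}, A_{q_2}$ and builds a copy of the coalescing graph $G$ by sequentially adjoining each walk up to its first intersection with what has already been built; the bound then follows because on $\{0\sim_G q_1\not\sim_G q_0\sim_G q_2\}$ one must have $A_0\cap A_{q_1}\neq\emptyset$ and $A_{q_0}\cap A_{q_2}\neq\emptyset$, and these two events are independent by construction. Your argument instead works directly inside $\mathcal{G}_\mu$: on $\{q\not\sim q_0\}$ the two unions $U_1=A_q\cup A_{q_1}$ and $U_2=A_{q_0}\cup A_{q_2}$ are vertex-disjoint, hence measurable with respect to disjoint families of $J$-variables, and the factorization follows. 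Your route is more elementary in that it avoids introducing an auxiliary coupling, but---as you correctly flag---it needs a depth-$T$ truncation to make the ``sum over disjoint pairs $(u_1,u_2)$'' rigorous, since the $U_i$ are infinite random sets and the sum over their realizations is not literally a countable sum. The paper's coupling sidesteps this by working with independent walks from the outset, so the factorization is immediate. Both arguments arrive at the same inequality, and your Step~3 matches the paper's treatment of the four-point sum essentially verbatim.
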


Lemma \ref{momentbounds} tells us that $|T_{(0,0)}^S| = O(n^{\alpha+\epsilon})$ with high probability, which 
aligns with the heuristic laid out in Section \ref{sketch}. Lemma \ref{cov} tells us that the sizes of distinct 
connected components are weakly correlated. It is instructive to compare the bound in Lemma \ref{cov} to the 
naive bound one obtains by applying Lemma \ref{momentbounds} as follows
$$ \sum_{q_0\in S}  
\sum_{q_0\in S} Cn^{2\alpha + \epsilon} \Cov \Big(|T_q^S|, |T_{q_0}^S|\Big)\leq C|S|n^{2\alpha + \epsilon} = Cn^{1+2\alpha + \epsilon}.$$
The bound in Lemma \ref{cov} improves this bound by a factor of $n^{\alpha -1}$.

\begin{proof}[Proof of Lemma \ref{momentbounds}]
Without loss of generality we can assume $q=(0,0)$. First, we estimate the number of
points in $[0,n]\times \{0\}$ whose ancestral lines merge with that of a given point. 
In what follows we use the shorthand $p_t(n) := p_t(0,n) = p_t(k,k+n)$, 
and define $p_t(k) = 0$ for all $t<0$. Recalling that
$$\P((n_1,0)\sim (j,s)) = \frac{\E|A_{(n_1,0)}\cap A_{(j,s)}|}{\E|A_{(0,0)}\cap\Tilde{A}_{(0,0)}|} = C\sum_{(k,t)\in\Z^2}p_{t}(k-n_1)p_{t+s}(j-k),$$
where $C$ depends on $\alpha$, we have for any $(j,s)\in \mathbb{Z}^2$
\begin{equation}
\begin{aligned}
\label{twopoint probability}
    \sum_{0\leq n_1\leq n}\P((n_1,0)\sim (j,s)) 
    & \leq C  \sum_{0\leq n_1\leq n}\sum_{(k,t)\in\Z^2}p_{t}(k-n_1)p_{t+s}(j-k)\\
    & \leq C \sum_{0\leq n_1\leq n}\sum_{t\geq 0}p_{2t+s}(j-n_1) \\
    & \leq Cn^{\alpha + \epsilon}.
\end{aligned}
\end{equation}
We used the semi-group property of $p_t(x,y)$ summing over $k$ in the second inequality, and then used Lemma \ref{dissipative} to finish. 
To prove Lemma \ref{momentbounds} we compute
\begin{align*}
    \E|T_{(0,0)}^S|^2 = \E\sum_{0\leq n_1,n_2\leq n}1_{(0,0)\sim (n_1,0)}1_{(0,0)\sim (n_2,0)} = \sum_{0\leq n_1,n_2\leq n}\P((0,0)\sim (n_1,0)\sim (n_2,0)).
\end{align*}
The last probability can be bounded as
\begin{equation}
\begin{aligned}
\label{tripsbound}
\P((0,0)\sim (n_1,0)\sim (n_2,0)) 
& \leq C\Big(\sum_{(k,t)\in\Z^2} p_t(k)p_t(n_1-k)\P((k,t)\sim (n_2,0)) \\
& \quad \qquad+ \sum_{(k,t)\in\Z^2}p_t(k)p_t(n_2-k)\P((k,t)\sim (n_1,0))\\
& \quad \qquad + \sum_{(k,t)\in\Z^2}p_t(n_2-k)p_t(n_1-k)\P((k,t)\sim (0,0))\Big )\\
& := C(E_1(n_1,n_2) + E_2(n_1,n_2) + E_3(n_1,n_2)). 
\end{aligned}
\end{equation}
$E_1(n_1,n_2)$ upper bounds the probability that $A_{(0,0)}$ and $A_{(n_1,0)}$ meet first at the point $(k,t)$ and $(n_2,0)$ is in the same connected component as $(k,t)$. 
$E_2(n_1,n_2)$ and $E_3(n_1,n_2)$ are the same but for the cases that $(0,0)$ and $(n_2,0)$ meet first or $(n_2,0)$ and $(n_1,0)$ meet first respectively. 
Consider the first term above. Summing it over $n_1$ and $n_2$ gives
\begin{align*}
\sum_{0\leq n_1,n_2\leq n}E_1(n_1,n_2)
& = \sum_{0\leq n_1,n_2\leq n}\sum_{(k,t)\in\Z^2} p_t(k)p_t(n_1-k)\P((k,t)\sim (n_2,0)) \\
& = C \sum_{0\leq n_1\leq n}\sum_{(k,t)\in\Z^2} p_t(k)p_t(n_1-k)\Big(\sum_{0\leq n_2\leq n}\P((k,t)\sim (n_2,0))\Big)\\
& \leq Cn^{\alpha+\epsilon}\sum_{0\leq n_1\leq n}\sum_{(k,t)\in\Z^2} p_t(k)p_t(n_1-k)\\
& = Cn^{\alpha+\epsilon} \sum_{0\leq n_1\leq n}\sum_{t\geq 0}p_t(n_1) \\
& \leq Cn^{2\alpha +2\epsilon}.
\end{align*}
We summed over $n_2$ and used \ref{twopoint probability} for first inequality, 
applied the semigroup property and then used Lemma \ref{twopoint} to finish. 
Since $\mu$ is symmetric and we are summing
over $n_1,n_2\in[0,n]$, one has the same bound on the contribution of $E_2$ and $E_3$. 
Thus $\E|T_0^S|^2 \leq C n^{2\alpha+2\epsilon}$.
\end{proof}

Next we establish Lemma \ref{cov}. We need the following lemma, which uses a coupling argument inspired by Lemma 7.1 in \cite{JL}.

\begin{lemma}
\label{lem: quartet coalescence}
For any $q_0,q_1,q_2\in \Z^2$
\begin{equation}\label{quartet}
    \Cov(1_{0\sim q_1}, 1_{q_0\sim q_2}) \leq \P(0\sim q_1\sim q_0\sim q_2).
\end{equation}
\end{lemma}

\begin{proof}
We have 
$$\Cov(1_{0\sim q_1}, 1_{q_0\sim q_2}) = \P(0\sim q_1 \cap q_0 \sim q_2)-\P(0\sim q_1)\P(q_0\sim q_2),$$
and we estimate the RHS with a coupling argument. Sample independent ancestral lines $A_0, A_{q_0}, A_{q_1}, A_{q_2}$ which start at $0, q_0, q_1, q_2$, and build a random graph $G$ inductively as follows. Start with $G= A_0$. If $A_{q_0}\cap G \neq \emptyset$ then add to $G$ all points in $A_{q_0}$ \textit{before} the intersection with $G$, otherwise add to $G$ \textit{all} of $A_{q_0}$. Repeat this process with the updated $G$ for $A_{q_1}$ and $A_{q_2}$. Note that the random graph $G$ is a sample of the random graph $\mathcal{G}_\mu$ restricted to the ancestral lines of $0, q_0, q_1, q_2$ and we can estimate
\begin{align*}
    \P(0\sim_G q_1\not\sim_G q_0\sim_G q_2) 
        & \leq \P(\{A_0\cap A_{q_1}\neq \emptyset\}\cap \{A_{q_0}\cap A_{q_2}\neq\emptyset\})\\
        & = \P(\{A_0\cap A_{q_1}\neq \emptyset\})\P(\{A_{q_0}\cap A_{q_2}\neq\emptyset\})\\
        & = \P(0\sim q_1)\P(q_0\sim q_2),
    \end{align*}
    where $x\sim_Gy$ means that $x$ and $y$ are in the same connected component of $G$. (\ref{quartet}) now follows since
    $$\P(0\sim q_1\cap q_0\sim q_2) = \P(0\sim q_1\sim q_0\sim q_2) + \P(0\sim q_1\not\sim q_0\sim q_2).$$
\end{proof}

Now we prove Lemma \ref{cov}.
\begin{proof}[Proof of Lemma \ref{cov}]
We assume without loss of generality that $q=(0,0)$ and we write $0$ for the point $(0,0)$. 
Applying Lemma \ref{lem: quartet coalescence} gives 
\begin{equation}
\begin{split}
\Cov(|T_0^S|, |T_{q_0}^S|) 
& =\sum_{q_1,q_2\in S} \Cov(1_{0\sim q_1}, 1_{q_0\sim q_2})\\
& \leq \sum_{q_0,q_1,q_2\in S} \P(0\sim q_0\sim q_1\sim q_2).
\end{split}
\end{equation}

We can now handle the RHS in the same way as the previous lemma. 
For clarity we give the argument when $k=1$ and $S = [0,n]\times \{0\}$. 
Note that we only use the semi-group property and the bound (\ref{twopoint}), 
hence the same argument works for $S = \cup_{j=1}^k [0,x_j n]\times\{t_j\}$ with straightforward adjustments. 

To estimate the above sum we break the event that $(0,0)\sim(i,0)\sim (k,0)\sim (j,0)$ into cases 
and estimate each case, similar to Lemma \ref{momentbounds}.
There are 18 cases since there are $6$ ways to choose which pair of ancestral lines meet first, 
and then $3$ ways to choose which two ancestral lines coalesce next. Note that these cases are not disjoint, 
since more than two ancestral lines may merge at a given point. However, up to permuting the vertices, 
there are only 2 distinct types of sums corresponding to the two different ways the ancestral lines can coalescences . 
Figure 2 depicts the two possibilities. It suffices to estimate the contributions from those two types. 

\begin{figure}
    \centering
    \subfloat[\centering ]{{
    \includegraphics[width = 6cm, height=5cm]{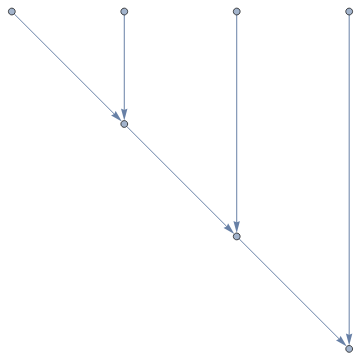}
    }}
    \hspace{3cm}
    \subfloat[\centering ]{{
    \includegraphics[width = 6cm, height=5cm]{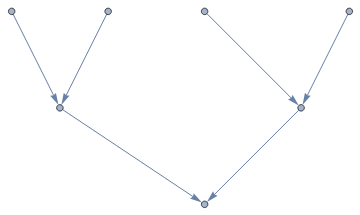}
    }}
    \caption{In (A), the left the ancestral lines coalesce one at a time. In (B) the ancestral lines coalesce pairwise.}
\end{figure}

The first type, depicted in (A) of figure 3, is when the lines coalesce one at a time. Without loss of generality we estimate this by 
the probability that the ancestral line of $(0,0)$ merges with the ancestral line of $(i,0)$ at $(a_1,t_1)$, 
then merges with the ancestral line of $(j,0)$ at $(a_2,t_2)$ (with $t_2\leq t_1$), and finally with that of $(k,0)$. We can upper bound this probability
\begin{align*}
C\sum_{(a_1,t_1), (a_2,t_2)\in \Z^2} p_{t_1}(a_1)p_{t_1}(a_1-i)p_{t_2-t_1}(a_2-a_1)p_{t_2}(j-a_2)\P((a_2,t_2)\sim (k,0)).
\end{align*}
We want to sum over $0\leq i,j,k \leq n$. Summing first in $k$, and then repeatedly using the semi-group property and the estimate (\ref{twopoint}) we get
\begin{align*}
C\sum_{0=i,j,k}^n & \sum_{(a_1,t_1), (a_2,t_2)\in\Z^2} p_{t_1}(a_1)p_{t_1}(a_1-i)p_{t_2-t_1}(a_2-a_1)p_{t_2}(j-a_2)\P((a_2,t_2)\sim (k,0))\\
& \leq Cn^{\alpha+\epsilon}\sum_{0=i,j}^n \sum_{(a_1,t_1), (a_2,t_2)\in\Z^2} p_{t_1}(a_1)p_{t_1}(a_1-i)p_{t_2-t_1}(a_2-a_1)p_{t_2}(j-a_2)\\
& \leq Cn^{\alpha+\epsilon}\sum_{0=i,j}^n \sum_{(a_1,t_1)\in\Z^2} \sum_{t_2\in\Z} p_{t_1}(a_1)p_{t_1}(a_1-i)p_{2t_2-t_1}(j-a_1)\\
& \leq Cn^{2\alpha+2\epsilon}\sum_{0=i}^n \sum_{(a_1,t_1)\in\Z^2} p_{t_1}(a_1)p_{t_1}(a_1-i)\\
& \leq Cn^{3\alpha+3\epsilon}.
\end{align*}

The second type of term, depicted in (B) of figure 3, comes from when the four points first coalesce in pairs and then the paired ancestral lines coalesce. Say the ancestral line of $(0,0)$ merges with that of $(i,0)$ at $(a_1,t_1)$ and the ancestral line of $(j,0)$ merges with that of $(k,0)$ at $(a_2,t_2)$. Then finally the paired lines merge. This is upper bounded by 
\begin{align*}
C\sum_{(a_1,t_1), (a_2,t_2)\in\Z^2} p_{t_1}(a_1)p_{t_1}(a_1-i)p_{t_2}(a_2-j)p_{t_2}(a_2-k)\P((a_2,t_2)\sim (a_1,t_1)).
\end{align*}
Now summing over $0\leq i,j,k \leq n$ and changing the variables of summation gives
\begin{align*}
C\sum_{0=i,j,k}^n & \sum_{(a_1,t_1), (a_2,t_2)\in\Z^2} p_{t_1}(a_1)p_{t_1}(a_1-i)p_{t_2}(a_2-j)p_{t_2}(a_2-k)\P((a_2,t_2)\sim (a_1,t_1))\\
& = C\sum_{0=i,j,k}^n\sum_{(a_1,t_1), (b_2,t_2)\in\Z^2} p_{t_1}(a_1)p_{t_1}(a_1-i)p_{t_2}(b_2)p_{t_2}(b_2-(k-j))\P((b_2+j,t_2)\sim (a_1,t_1))\\
& \leq C\sum_{0=i,j}^n\sum_{l=-n}^{n}\sum_{(a_1,t_1), (b_2,t_2)\in\Z^2} p_{t_1}(a_1)p_{t_1}(a_1-i)p_{t_2}(b_2)p_{t_2}(b_2-l)\P((b_2+j,t_2)\sim (a_1,t_1))\\
& \leq Cn^{\alpha+\epsilon} \sum_{0=i}^n\sum_{l=-n}^{n}\sum_{(a_1,t_1), (b_2,t_2)\in\Z^2} p_{t_1}(a_1)p_{t_1}(a_1-i)p_{t_2}(b_2)p_{t_2}(b_2-l).
\end{align*}
For the first equality we made the change of variables $b_2=a_2-j$, for the first inequality we made the change of variables $l=k-j$, and in the last inequality we summed of $j$. The sum now can be handled exactly like the previous sums. 
\end{proof}
\begin{remark}
   Lemma \ref{heatkernel} can be adapted to the setting of the Hammond-Sheffield urn, yielding similar estimates without the assumption of the strong renewal theorem. By the arguments of Section 6 this gives another proof of Gaussianity for the Hammond-Sheffield urn.   
\end{remark} 
\vspace{-4mm}

\section{Gaussianity via Lindeberg Swapping}
Now we turn to the Gaussianity of $S(n,t)$. The approach is to write, assuming $n\geq 0$, 
$$\frac{1}{\sigma_n} (S(n,0) -\mathbb{E}S(n,0))=  \frac{1}{\sigma_n}\sum_{\beta}(B_\beta-\mathbb{E}B_{\beta}) |T_\beta\cap [0,n]\times \{0\}|,$$
and first show each $B_\beta-\mathbb{E}B_{\beta}$ may be ``swapped" for an independent Gaussian, $g_\beta$, without affecting the limiting distribution. 
After this it remains to show
$$\frac{1}{\sigma_n}\sum_{\beta}g_{\beta}|T_{\beta}\cap[0,n] \times\{0\}|$$
converges to a Gaussian. For this we note that conditioned on $\mathcal{G}_{\mu}$
the above is a Gaussian with variance 
$$\frac{1}{\sigma_n^2}\sum_{\beta}|T_{\beta}\cap[0,n]\times \{0\}|^2.$$
Now Gaussianity follows from from using Lemma \ref{cov} to show this variance converges to a deterministic limit. 
The proof below is slightly more complicated since we need to prove $S_n(x,t) := \frac{1}{\sigma_n} (S(\lfloor xn\rfloor ,\lfloor tn^{\alpha}L(n)^{-1}\rfloor) -\mathbb{E}S(\lfloor xn\rfloor ,\lfloor tn^{\alpha}L(n)^{-1}\rfloor))$
is a Gaussian field, which means dealing with finite dimensional distributions. 

\begin{proof}[Proof of Theorem 1.2]
Let $\sigma_n$ be as given in Theorem 1. Theorem 1 follow if we show that the random field $S_n(x,t)$, defined in Theorem \ref{thm: main} converges to a Gaussian random field.
Indeed, by the covariance asymptotics in Lemma \ref{covar} the limit field must be $W_\alpha(x,t)$. Recall that to prove convergence to a Gaussian field it suffices 
to show that for any points $(x_1, t_1), ..., (x_k,t_k)\in \R^2$ and coefficients $c_1,...,c_k\in \R$ the random variable 
$$Y_n := \sum_{i=1}^{k} c_i S_n(x_i,t_i) = \sum_{i=1}^k c_i \frac{1}{\sigma_n}S\left(\lfloor x_i n\rfloor , \lfloor t_i L(n)^{-1}n^\alpha\rfloor\right)$$ 
converges to a Gaussian.

\textbf{Step 1:} We perform the swapping. Fix $n$ and enumerate the connected components which intersect 
$\cup_{i=1}^k [0,\lfloor x_i n\rfloor ]\times\{\lfloor t_i L(n)^{-1}n^{\alpha}\rfloor \}$ as $T_{\beta_1}, T_{\beta_2}..., T_{\beta_N}$.
Note $N$ is random, depending on $\mathcal{G}_\mu$. For each $\beta$, let 
\vspace{-3mm}
\begin{equation*}
    c_\beta := \frac{1}{\sigma_n}\sum_{i=1}^k \sgn(x_i)c_i|T_\beta\cap [0,\lfloor x_i n\rfloor ]\times\{\lfloor t_i L(n)^{-1}n^{\alpha}\rfloor \}|.
\end{equation*}
Abusing notation, we let $B_{\beta}$ denote the mean zero random variable $B_{\beta}-\E B_{\beta}$ so that we have
\vspace{-3mm}
\begin{equation*}
    Y_n = \sum_{j=1}^N c_{\beta_j}B_{\beta_j}.
\end{equation*}
Note that the above holds because we multiplied $c_i$ by $\sgn(x_i)$ in the definition of $c_{\beta}$; see the definition of $S(n,t)$ for $n\leq 0$. 
We also introduce independent mean zero Gaussian random variables $(G_\beta)_{\beta\in I}$ with the same variance as $B_{\beta}$, and define
\vspace{-3mm}
$$Y_n^l = \sum_{i=1}^{l-1} c_{\beta_i}G_{\beta_i} +\sum_{i=l+1}^N c_{\beta_i}B_{\beta_i}.$$
\begin{claim}
    If $Y_n^{N+1}$ converges in distribution, then $Y_n$ converges in distribution to the same limit. 
\end{claim}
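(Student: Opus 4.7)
The plan is to prove the claim by showing that $\E\phi(Y_n) - \E\phi(Y_n^{N+1}) \to 0$ for every $\phi \in C^3_b(\R)$; since this class of test functions characterizes weak convergence, the conclusion then follows immediately from the assumed convergence of $Y_n^{N+1}$. The natural starting point is the telescoping identity
\begin{equation*}
\E\phi(Y_n) - \E\phi(Y_n^{N+1}) = \sum_{l=1}^N \E\bigl[\phi(Y_n^l + c_{\beta_l}B_{\beta_l}) - \phi(Y_n^l + c_{\beta_l}G_{\beta_l})\bigr],
\end{equation*}
which isolates a single $B \leftrightarrow G$ swap at each step.

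I would then condition on $\mathcal{G}_\mu$. Conditional on $\mathcal{G}_\mu$, the enumeration $\beta_1,\dots,\beta_N$ and the coefficients $c_{\beta_l}$ are deterministic, and $B_{\beta_l}$, $G_{\beta_l}$ are each independent of $Y_n^l$ (which only involves indices $i\ne l$) while sharing mean zero and variance $4p(1-p)$. A third-order Taylor expansion of $\phi$ around $Y_n^l$ therefore eliminates the zeroth, first, and second order contributions in the difference, leaving a remainder bounded pointwise by $C\|\phi'''\|_\infty |c_{\beta_l}|^3(|B_{\beta_l}|^3 + |G_{\beta_l}|^3)$. Since $|B_{\beta_l}|\le 1$ deterministically and $G_{\beta_l}$ has uniformly bounded third absolute moment, summing over $l$ and taking total expectation yields
\begin{equation*}
\bigl|\E\phi(Y_n) - \E\phi(Y_n^{N+1})\bigr| \le C\|\phi'''\|_\infty \,\E\sum_\beta |c_\beta|^3.
\end{equation*}

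The hard part is to show this error vanishes as $n\to\infty$, and this is where Lemma \ref{momentbounds} (suitably extended to the multi-time set $S = \bigcup_{i=1}^k [0,\lfloor x_i n\rfloor]\times\{\lfloor t_i L(n)^{-1}n^\alpha\rfloor\}$ by the same argument) enters decisively. Using $|c_\beta|\le C\sigma_n^{-1}|T_\beta\cap S|$ together with the combinatorial identity $\sum_\beta |T_\beta\cap S|^3 = \sum_{q\in S}|T_q^S|^2$, one obtains
\begin{equation*}
\E\sum_\beta |c_\beta|^3 \le \frac{C}{\sigma_n^3}\sum_{q\in S}\E|T_q^S|^2 \le \frac{C\,n\cdot n^{2\alpha+\epsilon}}{\sigma_n^3} = C\,n^{-1/2 + \alpha/2 + \epsilon}\,L(n)^{3/2}.
\end{equation*}
For any $\alpha\in(0,1)$, choosing $\epsilon$ small enough makes the polynomial exponent strictly negative, and the slowly varying factor $L(n)^{3/2}$ is then absorbed at the cost of an arbitrarily small additional exponent. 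Hence the total swapping error tends to zero, and $Y_n$ must inherit the limiting distribution of $Y_n^{N+1}$.
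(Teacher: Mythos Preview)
Your proposal is correct and follows essentially the same Lindeberg swapping argument as the paper: telescoping sum, third-order Taylor expansion with the first two orders cancelling by moment matching, and then bounding $\E\sum_\beta |c_\beta|^3$ via the identity $\sum_\beta |T_\beta\cap S|^3 = \sum_{q\in S}|T_q^S|^2$ and Lemma~\ref{momentbounds}. The only cosmetic difference is that you appeal to an extension of Lemma~\ref{momentbounds} to the full multi-time set $S$, whereas the paper avoids this by first applying the elementary inequality $(a_1+\dots+a_k)^3 \le Ck^2(a_1^3+\dots+a_k^3)$ to reduce to single time slices $S_j=[0,\lfloor x_jn\rfloor]\times\{\lfloor t_jL(n)^{-1}n^\alpha\rfloor\}$ and then invokes Lemma~\ref{momentbounds} as stated; both routes yield the same $n^{1+2\alpha+\epsilon}/\sigma_n^3$ bound.
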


Fix a test function $\phi\in C^3(\R)$. Writing $\E_{Z}$ to denote expectation with respect to the sigma algebra generated by a random variable $Z$, we have
\begin{equation}
\label{theswap}
\begin{split}
    |\E\phi(Y_n)- \E\phi(Y_n^{N+1})| 
    & = |\E\sum_{i=1}^N \Big(\phi\left(Y_n^{i} + c_{\beta_i} B_{\beta_i} \right)-\phi\left(Y_n^{i} + c_{\beta_i} G_{\beta_i}\right)\Big)|\\
    & = |\E_{\mathcal{G}_\mu}\sum_{i=1}^N \E_{(G_{\beta_i}),(B_{\beta_i})}\Big(\phi^{'}\left(Y_n^{i})c_{\beta_i} (G_{\beta_i} - B_{\beta_i}\right) + \phi^{''}\left(Y_n^{i})c_{\beta_i}^2 (G_{\beta_i}^2 - B_{\beta_i}^2\right)\\
    &  \qquad\qquad\qquad\qquad\qquad\qquad\qquad\qquad - \phi^{'''}\left(q_i\right)c_{\beta_i}^3G_{\beta_i}^3 + \phi^{'''}\left(\tilde{q}_i\right)c_{\beta_i}^3 B_{\beta_i}^3\Big) |\\
    & \leq ||\phi||_{C^3}\left(\E|G_{\beta_1}|^3 + \E|B_{\beta_1}|^3\right)\E_{\mathcal{G}_\mu}\sum_{i=1}^N c_{\beta_i}^3.
\end{split}
\end{equation}
The second line came from Taylor expanding the difference in the first line to third order, where $q_i$ and $\tilde{q}_i$ 
are some points in the intervals $[Y_n^{i},Y_n^{i}+G_{\beta_i}]$ and $[Y_n^{i},Y_n^{i}+B_{\beta_i}]$ respectively. 
To get the third line we noted that for each $i$, the random variables $G_{\beta_i}$, $B_{\beta_i}$ 
are independent of the random variables $Y_n^i$, and $c_{\beta_i}$ and have matching first and second moments. 
Hence if we take the expectation of the $i$-th term with respect to $G_{\beta_i}$ and $B_{\beta_i}$ the $\phi^{'}$ and $\phi^{''}$ 
terms in the sum vanish. Finally we can estimate $\E\sum_{i=1}^N c_{\beta_i}^3$ by Lemma \ref{momentbounds} and the 
bound $(a_1 + a_2+...+a_k)^3\leq Ck^2(a_1^3 + ... + a_k^3)$ to get
\begin{align*}
\E\sum_{i=1}^N c_{\beta_i}^3 
& \leq \frac{Ck^2}{\sigma_n^3}\E \sum_{j=1}^k \sum_{i=1}^N |T_{\beta_i}\cap [0,\lfloor x_jn\rfloor ]\times \{\lfloor t_j n^\alpha L(n)^{-1}\rfloor\}|^3\\
& = \frac{Ck^2}{\sigma_n^3}\E\sum_{j=1}^k\sum_{i=0}^{\lfloor x_jn\rfloor} |T^{S_j}_{(i,\lfloor t_j L(n)^{-1}n^\alpha \rfloor)}|^2 \\
& \leq \frac{Ck^3}{\sigma_n^3}\Big(\max_{j}|x_jn|\Big)\Big(\max_{j,i}\E|T^{S_j}_{(i,\lfloor t_jn^\alpha L(n)^{-1}\rfloor)}|^2\Big) \\
& \leq \frac{C}{\sigma_n^3}n^{1+2\alpha+\epsilon},
\end{align*}
where $S_j = [0,x_jn]\times\{t_jn^{\alpha}L(n)^{-1}\}$. In the last line we used Lemma \ref{momentbounds} with some $\epsilon>0$ small to be chosen. Note by Lemma \ref{momentbounds} the constant $C$ depends on $\epsilon$, $k$, and the $x_i$. 
Finally, since $\sigma_n\geq c n^{\frac{1}{2}(1+\alpha)}L(n)^{-1/2}$, the above estimate and (\ref{theswap}) gives
$$ |\E\phi(Y_n)- \E\phi(Y_n^{N+1})|\leq Cn^{-\frac{1}{2}(1-\alpha-\epsilon)}L(n)^{3/2}.$$
Recall that by Theorem \ref{thm: Potter's}, Potter's Theorem, $L(n)$ grows slower than any power. 
Hence taking $\epsilon$ small enough and $n\to\infty$ proves the claim. Now we show $Y_n^{N+1}$ converges in distribution to a Gaussian.

\textbf{Step 2:} Gaussianity of $Y_n^{N+1}$. Conditioned on the random graph $\mathcal{G}_\mu$, 
the random variable $Y_n^{N+1}$ is a Gaussian with variance $V_n:= \sum_{i=1}^N c_{\beta_i}^2$. 
Thus to prove $Y_n^{N+1}$ converges to a Gaussian it is enough to show that the random variable $V_n$ converges 
to a constant in probability. Estimating the variance using Lemma \ref{cov} and Lemma \ref{momentbounds} gives
\begin{align*}
Var(V_n)  
& = \frac{1}{\sigma_n^4}Var\Big(\sum_{j=1}^N |T_\beta\cap S|^2\Big)\\ 
& = \frac{1}{\sigma_n^4}Var\Big(\sum_{q\in \cup S_j} |T_q^S|\Big)\\
& = \frac{1}{\sigma_n^4} \Big(\sum_{q\in \cup S_j} Var\Big(|T_q^S|\Big) + 2\sum_{q,q_0\in \cup S_j} \Cov\Big(|T_q^S|, |T_{q_0}^S|\Big)\Big)\\
& \leq C n^{-2-2\alpha}\left(n^{1+2\alpha + \epsilon} + n^{1+3\alpha + \epsilon}\right)L(n)^4 \\
& \leq C n^{\alpha+\epsilon-1}L(n)^4,
\end{align*}
where $C$ depends on $x_i$, $t_i$, $\epsilon$, and $\mu$. Since $\alpha <1$ and again Theorem \ref{thm: Potter's} implies $L(n)$ grows slower than any power, we may take $\epsilon$ sufficiently small and conclude $Var(V_n)\to 0$ as $n\to\infty$. Hence $Y_n^{N+1}$ converges to a Gaussian.
\end{proof}

\section{Proof of Theorem \ref{thm:fbm}}
In this section we work with a slightly modified $S_n(\cdot,0):\mathbb{R}\to\mathbb{R}$ 
defined as
$$S_n(x,0) := \frac{1}{\sigma_n}\Big( (1-\{xn\})S(\lfloor xn\rfloor, 0) + \{xn\} S(\lfloor xn \rfloor + 1, 0)-(2p-1)xn\Big),$$
where $\{x\}:= x- \lfloor x \rfloor$ is the fractional part of $x$. In words $S_n$ 
linearly interpolates between integer points. Note that such a modification changes the value of $S_n$ by at most 
$\frac{C}{\sigma_n}$, so it suffices to prove Theorem \ref{cor1} for this modified $S_n$. The advantage of doing this is that 
now $S_n$ is continuous. In fact, the lemma below implies $S_n(x,0)$ is actually Holder continuous. 
This gives the tightness needed to upgrade 
the finite dimensional distributional convergence in Theorem \ref{thm: main}. 

\begin{lemma}
Let $\alpha,p\in(0,1)$ and $\mu\in \Gamma_\alpha$. Then for  any $\gamma<\alpha/2$ we have 
$$\P( ||S_n(\cdot,0)||_{C^\gamma([0,1])} \geq \lambda )\leq C \lambda^{-(\alpha - 2\gamma)\gamma}$$
for all $n, \lambda >0$. The constant $C$ depends on $\mu$ and $\gamma$.
\end{lemma}

Recall that given $f:[0,1]\to\mathbb{R}$ the $C^\gamma$ norm is defined as 
$$||f||_{C^\gamma([0,1])} := \sup_{x,y\in[0,1]}\frac{|f(x)-f(y)|}{|x-y|^\gamma}$$

\begin{proof}
For each $k\geq 0$ we define the set of dyadic points of scale $k$ to be 
\begin{equation}
\label{dyadic}
Q_k := 2^{-k}\mathbb{Z}\cap[0,1],
\end{equation} 
and for any $f:[0,1]\to\mathbb{R}$ we define 
$$D_k(f) := \sup_{p,q\in Q_k, |p-q| = 2^{-k}} |f(p)-f(q)|$$
Note that by applying Lemma \ref{covar} with $x_1 = x_2 = 1$ and $t=0$ we 
have that
$$\mathbb{E} |S(n,0)|^2\leq C \sigma_n^2,$$
for all $n$, where $C$ depends on $\mu$. Hence for any $k,n>0$ we can estimate
\begin{align*}
\P(D_k(S_n(\cdot,0)) \geq 2^{-k\gamma})
& \leq 2^k\P(|S(\lfloor n2^{-k}\rfloor,0)|\geq \sigma_n 2^{-k\gamma})\\
& \leq 2^k \frac{\mathbb{E}|S(\lfloor n2^{-k}\rfloor,0)|^2}{\sigma_n^2}2^{2k\gamma}\\
& \leq C 2^{k(1+2\gamma)}\frac{\sigma_{n 2^{-k}}^2}{\sigma_n^2}\\
& \leq C 2^{k(2\gamma-\alpha)}(\frac{L(n)}{L(n 2^{-k})})^2
\end{align*}
where used Chebyshev's inequality to pass to the second line, the above bound on the 
second moment to pass to the third line, and the definition of $\sigma_n$ to pass to the last line. 
Applying Potter's Theorem, Theorem \ref{thm: Potter's}, to $L(x)$ implies that
there exists a $C>1$ and $m_0>0$, depending on $\gamma$,  such that 
\begin{equation}
    \label{eq: difference estimate}
\P(D_k(S_n(\cdot,0)) \geq 2^{-k\gamma})\leq C  2^{k(\gamma-\alpha/2)}
\end{equation}
for all $k,n\geq 1$ such that  $n 2^{-k}> m_0$. Since $m_0$ does not depend on $n$ or 
$k>0$ and we have the bound $|S(n,0)|\leq n$ almost surely, we may increase
$C$ so that (\ref{eq: difference estimate}) holds for all $n,k\geq 1$.

Now the Lemma follows since by (\ref{eq: difference estimate}) we have 
$$\P(\cap_{k\geq k_0}\{D_k(S_n(\cdot,0))\leq 2^{-k\gamma}\})\leq C 2^{-k_0(\gamma-\alpha/2)},$$
for all $k_0\geq 1$. Indeed, a standard chaining argument implies that we have $||S_n(\cdot,0)||_{C^\gamma([0,1])}\leq C2^{k_0(1-\gamma)}$
in the event that $\cap_{k\geq k_0}\{D_k(S_n(\cdot,t))\leq 2^{-k\gamma}\}$.

For completeness we provide a proof of this last claim. Suppose that $f:[0,1]\to \mathbb{R}$ is a continuous function 
such that $f(0) = 0$, and $D_k(f)\leq 2^{-k\gamma}$ for all $k\geq k_0$. 
We show $||f||_{C^\gamma([0,1])}\leq C 2^{k_0(1-\gamma)}$. 

First, note that since $D_{k_0}(f)\leq 2^{-k\gamma}$ we have the bound $D_k(f)\leq C 2^{k_0-k} 2^{-k_0\gamma}$
for all $k<k_0$.
Now take $x,y\in [0,1]$ with $x\neq y$, and let $n_0$ be such that $2^{-n_0-1}\leq |x-y|\leq 2^{-n_0}$. 
There exists sequences of points $x_{n_0+2}, x_{n_0+3},...$ and $y_{n_{0}+2}, y_{n_{0}+3},...$
such that $x_i\to x$, $y_i\to y$, $y_i,x_i\in D_i$, $|x_i-x_{i-1}|, |y_i-y_{i-1}|\leq 2^-i$, and $|x_{n_{0}+2}-y_{n_{0}+2}|\leq 2^{-n_{0}-2}$. 
Since $f$ is continuous we can estimate with the triangle inequality 
\begin{align*}
|f(x)-f(y)|
& \leq \sum_{i = n_0+2}^\infty |f(x_{i+1})-f(x_i)| + |f(x_{n_0+2})- f(y_{n_{0}+2})| + \sum_{i = n_0+2}^{\infty} |f(y_{i+1})-f(y_i)|.\\
\end{align*}
Now if $n_0\geq k_0$ then above sum implies $|f(x)-f(y)|\leq C 2^{-n_0\gamma}\leq C|x-y|^{\gamma}$. On the other hand 
if $n_0\leq k$ the above sum implies $|f(x)-f(y)|\leq C 2^{k_0-n_0}2^{-k_0\gamma}\leq C |x-y|^{\gamma}2^{k_0(1-\gamma)}.$
These two bounds imply the claim. 
\end{proof}

Now Theorem \ref{thm:fbm} follows from the above lemma and the Theorem \ref{thm: main} by a diagonalization 
argument. 
By Theorem \ref{thm: main} 
there are couplings $(\tilde{C}^k_n)_{k,n\in\mathbb{N}}$ such that for each $\epsilon>0$ and $k\in\mathbb{N}$
$$\lim_{n\to\infty}\tilde{C}_n^k(||S_n(\cdot,0)-B^{1/2+\alpha/2}||_{L^\infty(2^{-k}\mathbb{Z}\cap[0,k])}\geq \epsilon) = 0.$$
By taking a subsequence of the couplings we can find couplings $(C_n)_{n\in\mathbb{N}}$ 
such that for each $k\in \mathbb{N}$ and $\epsilon>0$
$$\lim_{n\to\infty}C_n(||S_n(\cdot,0)-B^{1/2+\alpha/2}||_{L^\infty(2^{-k}\mathbb{Z}\cap[0,k])}\geq \epsilon) = 0.$$
We claim that $(C_n)_{n\in\mathbb{N}}$ is our desired coupling. 
Fix $T,\epsilon>0$. Note that for any $f,g\in C^0([0,T])$ 
and $k\in \mathbb{N}$ one has 
$$||f-g||_{L^{\infty}([0,T])}\leq 
||f-g||_{L^{\infty}(2^{-k}\mathbb{Z}\cap [0,T])} +  2^{-k\gamma}(||g||_{C^\gamma([0,T])}+||f||_{C^\gamma([0,T])}).$$
Since $S_n(\cdot,0)$ and $B^{1/2+\alpha/2}$ are continuous almost surely we can estimate for any $n\in \mathbb{N}$ and $k>T$
\begin{align*}
    C_n(||S_n(\cdot,0)- B^{1/2+\alpha/2} ||_{L^\infty([0,T])}\geq \epsilon)
& \leq C_n(||S_n(\cdot,0)-B^{1/2+\alpha/2}||_{L^{\infty}(2^{-k}\mathbb{Z}\cap [0,T])}\geq \epsilon/3)\\
& + C_n(||S_n(\cdot,0)||_{C^\gamma([0,T])}\geq 2^{k\gamma}\epsilon/3)+C_n(||B^{1/2+\alpha/2}||_{C^\gamma([0,T])}\geq 2^{k\gamma}\epsilon/3)\\
& \leq C_n(||S_n(\cdot,0)-B^{1/2+\alpha/2}||_{L^{\infty}(2^{-k}\mathbb{Z}\cap [0,T])}\geq \epsilon/3) + CT (\epsilon 2^{k\gamma})^{-(\alpha-2\gamma)\gamma}.
\end{align*}
We  applied Lemma 6.1 and a union bound to the second
term and used the well known bound on the Holder contuity of fractional Brownian motion for the third term. 
By the construction of the $C_n$ this implies that
$$\limsup_{n\to\infty}C_n(||S_n(\cdot,0)- B^{1/2+\alpha/2} ||_{L^\infty([0,T])}\geq \epsilon)
\leq CT (\epsilon 2^{k\gamma})^{-(\alpha-2\gamma)\gamma}.$$
But $k\in \mathbb{N}$ is arbitrary and $T,\epsilon>0$ are fixed, so taking $0<\gamma <\alpha/2$, and 
$k\to\infty$ proves the theorem. 

\section{Applying Theorem \ref{thm: main} to the Voter Model}
In this section we prove Theorem \ref{cor1}. It follows from 
Theorem \ref{thm: main} and the observation that the random field 
$X_p(n,t):\mathbb{Z}^2\to \{-1,+1\}$
restricted to a time slice $\mathbb{Z}\times \{0\}\subset \mathbb{Z}^2$
has the law of $\nu_p$. Recall that $\nu_p$ is the extremal invariant measure 
of the voter model satisfying 
$$\nu_p(\{X:\mathbb{Z}\to\{-1,+1\}|\ X(0) = 1\})=p.$$
We will use the following characterization of the $\nu_p$ which follows 
immediately from Theorem 1.9 in \cite{holllig}.
\begin{theorem}\label{thm: extremal measures}
    Let $\alpha\in(0,1)$, and $\mu\in\Gamma_\alpha$. 
    The voter model with interactions $\mu$ has a one-parameter 
    family of extremal invariant measures given by $\{\nu_p\ | p\in[0,1]\}$, where $\nu_p$ 
    is the unique extremal invariant measure with 
    $\nu_p(\{\eta\ | \eta(0) = 1\}) = p$. Moreover, if 
    $(\eta_t)_{t\in\N}$ is the evolution of the voter model from 
    initial state $\eta_0$, distributed according to the $p$-Bernoulli product measure,
    then the law of $\eta_t$ converges to $\nu_p$.
\end{theorem}
Now we prove Theorem \ref{cor1}.
\begin{proof}[Proof of Theorem \ref{cor1}]
First, we show that the field $X_p(n,t)$,restricted to $\mathbb{Z}\times\{0\}$, has the law $\nu_p$. 
After this we show $X_p(\cdot,0)$ converges to fractional Gaussian noise by Theorem \ref{thm: main}.
Fix $p\in(0,1)$ and let $\eta_0:\mathbb{Z}\to\{-1,+1\}$ be distributed according to the 
$p$-Bernoulli product measure. Define $(\eta_t)_{t\in\N}$ to be the voter model 
evolving from $\eta_0$. We argue that $X_p(\cdot,t)$ and $\eta_t$ have the same 
limiting distribution. Since Theorem \ref{thm: extremal measures} implies that 
$\eta_t$ has $\nu_p$ as a limiting distribution, and $X_p(\cdot,t)$ has the same 
distribution for any $t$ we can conclude $X_p(\cdot,0)$ has law $\nu_p$. 

Fix any $S := \{q_1,q_2,...,q_m\}\subset \Z$ and $t\in \mathbb{N}$. 
We construct a coupling $(\xi,\eta)$ such that 
$\eta,\xi:S\to\{-1,+1\}$ have the same law as $\eta_t$ and $X_p(\cdot,t)$ restricted to the 
set $S$, and 
$\xi(q_i) = \eta(q_i)$ for all $i = 1,...,m$ with probability tending to 1 as $t\to\infty$.

To construct the coupling, sample $G_\mu$, and for any $(n,t)\in\mathbb{Z}^2$ and $s\leq t$ define $a_s(n,t)$ to be 
the unique ancestor, in $G_\mu$, of $(n,t)$ at time $s$. We define $\eta:\mathbb{Z}\to\{-1,+1\}$ 
simply by 
$$\eta(q_i) := \eta_0(a_0(q_i,t)).$$
Note $\eta$ has the law of $\eta_t$ restricted to the set $S$. 
To construct $\xi$ we define the time 
$$t_{f} := \sup \{s\leq t\ |\ |\{a_s(q_i,t)\ |\ i\in [1,m]\}|=|\{a_{s_0}(q_i,t)\ |\ i\in [1,m]\}|\ \forall\ s_0\leq s\}.$$
In words, $t_f$ is the last time that there is any coalescence among 
the ancestral lines of $(q_1,t),...,(q_m,t)$. Now, define
$$\xi(m) = \eta_0(a_{\min\{t_f,0\}}(n,t)).$$
By the definition of $X_p(n,t)$, $\xi$ has the same law as $X_p(\cdot,t)$ restricted to $S$. 
Now we note that in the event $\{t_f\geq 0\}$ we have that $\xi(q_i) = \eta(q_i)$ for each $i\in [1,m].$ 
But for any fixed finite $S$,
$$\P\left(\{t_f\geq 0\}\right)\to 1,$$
as $t\to\infty,$ so we see that $\eta_t$ and $X_p(\cdot,t)$ have the same limit law. 
Now applying Theorem \ref{thm: extremal measures}, and using that $X_p(\cdot,t)$ has the same 
distribution for all $t$, we see that $X_p(\cdot,0)$ has law $\nu_p$.

By the above observation it now suffices to prove that if $\xi(n) = X_p(n,t)$ and $F^n_p:\mathcal{S}\to\mathbb{R}$ is the functional 
defined by 
$$F^n_p(\phi) := \frac{1}{\sigma_n\sqrt{\alpha(\alpha/2+1/2)}}\sum_{i\in\mathbb{Z}} \phi(i/n)(\xi(i)-(2p-1)),$$
then $F_p^n(\phi)$ converges in distribution to a mean $0$ Gaussian with variance 
$$B_{\alpha/2}(\phi,\phi):= \int_{\mathbb{R}}\int_{\mathbb{R}} \frac{\phi(x)\phi(y)}{|x-y|^{1-\alpha}}.$$
We do this by applying Theorem \ref{thm: main}
 
Fix $\phi\in C_c^\infty(\R)$ with $\supp(\phi)\subset B_R(0)$. For any $N>0$ let $\round{x}$ 
denote the closest element of $\frac{1}{N}\Z$ to $x$ and in what follows we let 
$$c_n:= \frac{1}{\sigma_n\sqrt{\alpha(\alpha/2+1/2)}}.$$
For any $N\in\mathbb{N}$ we compute
\begin{equation}
\begin{split}
F^n_p(\phi)
& = c_n\sum_{i\in\Z} \phi\left(\frac{i}{n}\right)\left(\xi(i)-(2p-1)\right)\\
& = c_n\sum_{i\in \Z\cap B_{Rn}(0)}\phi\left(\round{\frac{i}{n}}\right)\left(S_n\left(\round{\frac{i}{n}} + \frac{1}{N}\right)-S_n\left(\round{\frac{i}{n}}\right)\right)\\ 
& \quad\quad\quad\quad\quad\quad + c_n\sum_{i\in \Z\cap B_{Rn}(0)} \left(\phi\left(\frac{i}{n}\right)-\phi\left(\round{\frac{i}{n}}\right)\right)(\xi(i)-(2p-1))\\
& := M_N(n) + E_N(n).
\end{split}
\end{equation}
$S_n$ is as defined in Theorem \ref{thm: main}. By Theorem \ref{thm: main}, if we fix $N$,
$M_N(n)$ converges in distribution to a centered Gaussian with variance
\begin{equation*}
\begin{split}
\frac{1}{\alpha(\alpha/2+1/2)}& \sum_{x,y\in \frac{1}{N}\Z}\phi(x)\phi(y)\E\left(\left(B^{\frac{1+\alpha}{2}}\left(x+\frac{1}{N}\right)- B^{\frac{1+\alpha}{2}}(x)\right)\left(B^{\frac{1+\alpha}{2}}\left(y+\frac{1}{N}\right)- B^{\frac{1+\alpha}{2}}(y)\right)\right)\\
& = \frac{1}{\alpha(\alpha+1)}\sum_{x,y\in\frac{1}{N}\Z}\phi(x)\phi(y)\left(|x-y+\frac{1}{N}|^{2H}+|x-y-\frac{1}{N}|^{2H}-2|x-y|^{2H}\right)\\
& = \sum_{x\neq y\in\frac{1}{N}\Z}\phi(x)\phi(y)|x-y|^{2H-2}N^{-2} + O(RN^{-\alpha}), 
\end{split}
\end{equation*}
as $n\to\infty$. Recall $B^{\frac{1+\alpha}{2}}$ is fractional Brownian motion with Hurst parameter $\frac{1}{2} + \frac{\alpha}{2}$, and 
we used that $\E B^H(t)B^H(s) = \frac{1}{2}(|t|^{2H} + |s|^{2H} - |t-s|^{2H})$ 
in the first equality, and then Taylor expanded for the second equality. For any $N\in \mathbb{N}$ we also have the estimate
\begin{align*}
\mathbb{E}|E_N(n)|^2
& \leq c_n^2\sum_{i,j\in B_{Rn}(0)} \left(\phi\left(\frac{i}{n}\right)-\phi\left(\round{\frac{i}{n}}\right)\right)\left(\phi\left(\frac{j}{n}\right)-\phi\left(\round{\frac{j}{n}}\right)\right)\Cov(\xi(i),\xi(j))\\
& \leq c_{n}^2 \frac{||\phi||_{Lip}^2}{N^2}\sum_{i,j\in B_{Rn}(0)}|\Cov(\xi(i),\xi(j))|\\
& \leq \frac{C||\phi||_{Lip}^2}{\sigma_n^2 N^2}\sum_{i\in B_{Rn}(0)}\mathbb{E}|T_{(i,0)}\cap B_{Rn}(0)\times\{0\}|\\
& \leq \frac{C||\phi||_{Lip}^2}{\sigma_n^2 N^2}\sigma_{2Rn}^2\\
& \leq \frac{C||\phi||_{Lip}^2R^{1+\alpha}}{N^2},
\end{align*}
where $C$ depends on $\alpha$, where we applied Lemma \ref{covar} to bound the sum of the covariances. Hence for any fixed $\phi$ 
we may take a sequence $(N_n)_{n\in\N}$ such that $N_n\to\infty$ as $n\to\infty$ sufficiently slowly, and see that 
$F^n_p(\phi)$ converges in distribution to the centered Gaussian with variance 
$$\int_{\R}\int_{\R}\frac{\phi(x)\phi(y)}{|x-y|^{1-\alpha}}dxdy.$$
To see this for a general Schwarz function we approximate it by a compactly supported function and use the following variance estimate. For any $\phi\in\mathcal{S}(\R)$ compute
\begin{equation}
\begin{split}
Var(F_p^n(\phi))
& = \frac{1}{\alpha(\alpha/2 + 1/2)\sigma_n^2}\E (\sum_{i\in\Z}\phi\left(\frac{i}{n}\right)(\xi(i)-(2p-1)))^2\\
& \leq \frac{C}{\sigma_n^2}\sum_{i,j\in\Z}|\phi\left(\frac{i}{n}\right)||\phi\left(\frac{j}{n}\right)|\Cov (\xi(i),\xi(j))|\\
& \leq \frac{C}{\sigma_n^{2}}\sum_{l\in\N}\left(\max_{(2^{l-1}-1)n\leq k\leq 2^l n}\sum_{i\in \Z}\phi\left(\frac{i}{n}\right)\phi\left(\frac{i}{n} + \frac{k}{n}\right)\right)\sum_{2^{l-1}n\leq k\leq 2^l n}\P((0,0)\sim (0,k))\\
& \leq Cn^{-1-\alpha}L(n)\sum_{l\in\N} (n||\phi||_{L^1}||x^2\phi(x)||_{L^\infty} 2^{-l}) 2^{l\alpha}n^{\alpha}L(2^ln)^{-1}\\
& \leq C ||\phi||_{L^1}||x^2\phi(x)||_{L^\infty} \sum_{l\in\N}2^{-(1-\alpha)l}L(n)L(2^ln)^{-1}\\
& \leq C ||\phi||_{L^1}||x^2\phi(x)||_{L^\infty},
\end{split}
\end{equation}
where we used Lemma \ref{covar} and that $\sigma_n^2 = \tilde{c}_p n^{1+\alpha}L(n)^{-1}$ 
in the third inequality, and used Potter's Theorem for slowly varying functions 
in the last line. Here $C$ depends on $\mu$ through its dependence on $\alpha$. 
Now for $\psi\in\mathcal{S}(\R)$ take a sequence of lengths $R_n\to\infty$ and
write $\psi = \psi \chi(x/ R_n) + \psi (1-\chi(x/R_n))$ for a smooth bump function 
$\chi$ supported in the ball of radius $1$. Taking $R_n\to\infty$ slowly enough in 
$n$ gives that $F_p^n(\psi\chi(\cdot/R_n))$ converges to $ FGF_{\alpha/2}(\psi)$ in 
distribution, by the result for compactly supported functions, and that 
$F_p^n(\psi(1-\chi(\cdot/R_n))$ converges to $0$ in probability, by the variance 
estimate above. 

The above implies that $F_p^n$, viewed as a random tempered distribution, converges in distribution to $FGF_{\alpha/2}$.
A proof of this is given in \cite{generalizedrandomfields}; see Corollary 2.4. For the sake of the reader we explain 
why this is true. In this setup convergence in distribution means that given any bounded continuous function 
$\Phi: (\mathcal{S}(\mathbb{R}))' \to \mathbb{R}$ 
one has
\begin{equation}
\label{eq:convergence in distribution}
\mathbb{E} \Phi(F_p^n)\to \mathbb{E} \Phi(FGF_{\alpha/2}),
\end{equation}
as $n\to\infty$. Here we have given $(\mathcal{S}(\mathbb{R}))'$ the strong topology, and 
so, similar to the situation of weak convergence of probability measure on $\mathbb{R}$,
\eqref{eq:convergence in distribution} follows if for any $\psi\in\mathcal{S}(\mathbb{R})$
$$\mathbb{E}e^{iF_p^n(\psi)}\to \mathbb{E}e^{iFGF_{\alpha/2}(\psi)},$$
as $n\to\infty$.
But this follows from the above result on the convergence in distribution of $F_p^n(\psi)$.  

\end{proof}

\printbibliography

\end{document}